\def\VecSp{V^n}
\def\Real{\mathbb{R}}
\def\Comp{\mathbb{C}}
\def\alg{\mathfrak{A}}
\def\SO{\mathrm{SO}}
\def\euler{\mathcal{E}}
\def\ending{\hfill$\square$}
\newtheorem{thm}{Theorem}
\newtheorem{cor}[thm]{Corollary}
\newtheorem{lem}[thm]{Lemma}
\newtheorem{prop}[thm]{Proposition}
\newtheorem{exmp}[thm]{Example}
\title{
Cyclic Vectors of Associative Matrix Algebras and Reachability Criteria for Linear and Nonlinear Control Systems*
}
\author{Yuliy  Baryshnikov$^{1}$ and Andrey Sarychev$^{2}$}% <-this % stops a space
\thanks{*This work was in part supported by AFOSR under Grant FA9550-11-1-0216}% <-this % stops a space
\thanks{$^{1}$Yuliy Baryshnikov is with the Department of Electrical and Computer Engineering, University of Illinois at Urbana-Champaign, Urbana, IL, USA,
        {\tt\small ymb@illinois.edu}}%
\thanks{$^{2}$Andrey Sarychev is with the Department of Mathematics and Informatics U.Dini, University of Florence,
        Firenze, 50134, Italy
        {\tt\small asarychev@unifi.it}}%
\begin{document}

\begin{abstract}
Motivated by the controllability/reachability  problems for switched linear control systems and some classes of nonlinear (mechanical) control systems we address a related problem of existence of a cyclic vector for an associative (matrix) algebra. We provide a sufficient criterion for existence of cyclic vector and draw conclusions for controllability.
\end{abstract}

\maketitle
\thispagestyle{empty}
\pagestyle{empty}

%%%%%%%%%%%%%%%%%%%%%%%%%%%%%%%%%%%%%%%%%%%%
\section{Introduction}

%The goal of the contribution is twofold.
The classical criterion of R.~Kalman for controllability of a linear system
\[
\dot{x}=Ax+Bu, x\in \VecSp \cong\Real^n, u\in U\cong \Real^r,
\]
states that the system is controllable if and only if
\begin{equation}\label{Kc}
  \mathcal{R}=\mbox{span}\{A^\ell \mathcal{B}\}_{0\leq\ell<n}=\VecSp,
\end{equation}
where $\mathcal{B}=\mbox{span}\{Bu|\ u \in U\}$.

If $r=1$ (in the case of single input systems) controllability is equivalent to the fact that the linear span of the set $\{A^kb\}_{k=0,\ldots,n-1}$ is the whole space $\VecSp$, or, equivalently, to the {\it cyclicity} (see Subsection \ref{alg_form} for   definitions)  of the single controlled direction $b$ with respect to the unital associative (commutative) matrix algebra generated by $A$.

Recent developments (see for example \cite{Su01,Su02,Xi03,Sa09}) showed that  studying controllability and reachability in many control problems requires a generalization of this relation, leading to the question of cyclicity of certain vectors of {\em non commutative}  matrix algebras.

We are driven here primarily by the {\em design problem}: given the uncontrolled dynamics defined by a finite set of operators, one questions,  whether there exists a control operator $B$ such that its range $\mathcal{B}$, acted upon by the compositions of the  operators, spans $\VecSp$, or, equivalently, whether there exist  a cyclic subspace of the associative algebra, generated by these operators.

We start with illustrating this question by two types of examples. First, we address the {\em switched control systems}.
 Second, we discuss controllability properties of some  classes of nonlinear controlled systems,  such as  classical and multidimensional rigid body or, more generally, controlled mechanical
 systems (see  \cite{BL} and references therein).  Infinite-dimensional counterparts of these problems include studying  (approximate) controllability of Euler and Navier-Stokes  equations (\cite{AgS}).

 We apply to these problems  Lie algebraic or geometric control methods and manage to reformulate some of the methods in terms of cyclicity of subspaces for associative finite-dimensional algebras.

\subsection{Motivation I: reachability and controllability for switched linear control systems}

Recall that  switched linear control system has the form
 \begin{equation}\label{slcs}
   \dot{x}(t)=A_{s(t)}x(t)+B_{s(t)}u(t).
 \end{equation}

Here $x(t)$ is the trajectory in the  state space  $\VecSp \cong \mathbb{R}^n,\ n>1$, $u(t)$ is the control taking its values in $U\cong \mathbb{R}^r$ and $s(t)$ is the {\it switching law}.
The function $s(t)$ takes its values in a set $\{1, \ldots , m\}$ describing different {\it realizations} or {\em states} of the system.

It is customary (and in most cases it does not restrict generality) to assume that the discontinuities of $s$ form a discrete sequence of times and to talk about
{\it switching sequence} (\cite{Xi03}) $\sigma=\{(j_1,\tau_1), \ldots , (j_K,\tau_K)  \}$, where $i_k$ is the index of realization which acts on an interval of length $\tau_k$.

The system is called globally reachable
\footnote{An alternative terminology adapted in
nonlinear control is "globally controllable". What is called controllability in part of linear control bibliography, including \cite{Su01,Xi03} is also called null controllability}
 if for each given triple $t_0, \tilde x, \hat x$ there exists $T > t_0$, a
switching sequence $\sigma(t)$ and a (piecewise-continuous) control function $u(t)$ which steers the system from $x(t_0)=\tilde x$ to $x(T)=\hat x$.

A natural question is to provide necessary and sufficient conditions for reachability. Those are obtained  in
\cite{Su02,Xi03}.
 \begin{prop}
Reachable set of the system (\ref{slcs}) from the origin is a linear subspace $\mathcal{R}$ defined by
\begin{eqnarray}\label{res}
 \mathcal{R}=\mbox{span}\{A^{\ell_n}_{m_n}\cdots A^{\ell_1}_{m_1}\mathcal{B}_{m_1}\}_{0 \leq \ell_i < n, \ 1 \leq m_i \leq m, \ i=1, \ldots,n}, \nonumber
 \end{eqnarray}
 where $\mathcal{B}_{m_i}=\mbox{\rm Im} B_{m_i}$ is the subspace of $\VecSp$ spanned
 by the columns of the matrix $B_{m_i}$. \ending
 \end{prop}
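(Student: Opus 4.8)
The plan is to describe, for each fixed switching sequence $\sigma=\{(j_1,\tau_1),\dots,(j_K,\tau_K)\}$, the subspace $R_\sigma\subseteq\VecSp$ of endpoints of trajectories issued from the origin along $\sigma$ as the control varies, and then take the union over all $\sigma$. By the Cauchy (variation of constants) formula such an endpoint equals
\[
\sum_{k=1}^{K}\Phi_k\!\left(\int_0^{\tau_k}e^{(\tau_k-s)A_{j_k}}B_{j_k}u_k(s)\,ds\right),\qquad
\Phi_k=e^{\tau_K A_{j_K}}\cdots e^{\tau_{k+1}A_{j_{k+1}}},
\]
which is linear in the control, so $R_\sigma$ is a linear subspace. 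The $k$-th summand, as $u_k$ ranges over all controls, sweeps out precisely the Kalman reachable subspace $\mathcal{R}_{j_k}:=\mbox{span}\{A_{j_k}^{\ell}\mathcal{B}_{j_k}\}_{0\le\ell<n}$ of the autonomous pair $(A_{j_k},B_{j_k})$ --- and, crucially, this holds for every $\tau_k>0$, since the reachable set of a single linear system at any positive time already equals its Kalman subspace. Hence $R_\sigma=\sum_{k=1}^{K}\Phi_k\mathcal{R}_{j_k}$.

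Next I would prove $\mathcal{R}\subseteq$ (the displayed right-hand side). Expanding each $\Phi_k$ into its convergent exponential series, every vector of $\Phi_k\mathcal{R}_{j_k}$ is a limit of finite linear combinations of ``words'' $A_{i_p}^{c_p}\cdots A_{i_1}^{c_1}b$ with $b\in\mathcal{B}_{i_1}$ and nonnegative exponents, hence lies in the subspace $W$ spanned by all such words (finite-dimensional, hence closed). By Cayley--Hamilton each $A_i^{c}$ is a polynomial of degree $<n$ in $A_i$, so one may take all exponents $<n$; and the ascending chain $W_0\subseteq W_1\subseteq\cdots$ with $W_0=\sum_i\mathcal{B}_i$ and $W_{q+1}=\sum_i\Real[A_i]W_q$ stabilizes as soon as two consecutive terms coincide (then $W_{q+2}=\sum_i\Real[A_i]W_{q+1}=\sum_i\Real[A_i]W_q=W_{q+1}$), hence after at most $\dim\VecSp=n$ steps. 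Thus $W=W_n$ is exactly the displayed subspace; it is invariant under every $A_i$ and contains every $\mathcal{B}_i$; and $R_\sigma\subseteq W$ for all $\sigma$, so $\mathcal{R}\subseteq W$.

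The reverse inclusion must simultaneously establish that the union $\mathcal{R}=\bigcup_\sigma R_\sigma$ is an honest subspace (rather than a mere cone), and the clean way to do this is to exhibit one switching sequence $\sigma^{\ast}$ with $R_{\sigma^{\ast}}=W$. Take $\sigma^{\ast}$ to cycle through all modes $1,\dots,m$ a large number of times, with block durations $\vec\tau$; running the earlier blocks with zero control leaves the state at the origin, so by superposition $R_{\sigma^{\ast}(\vec\tau)}=\sum_k\Phi_k(\vec\tau)\mathcal{R}_{j_k}$, a family of subspaces of $W$ that depends analytically on $\vec\tau$. One then shows that for generic (in particular, all small positive) $\vec\tau$ this sum fills $W$: differentiating the generators $\Phi_k(\vec\tau)A_{j_k}^{\ell}b$ in $\vec\tau$ brings down factors $A_{j_p}$, so the generators together with all their $\vec\tau$-derivatives span every word, i.e. span $W$; since these generators are analytic and $W$-valued, a lower-semicontinuity-of-rank (Wronskian-type) argument then yields that, over a long enough cycling, generic $\vec\tau$ already make the generators themselves span $W$. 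Then $\mathcal{R}\supseteq R_{\sigma^{\ast}}=W$, and together with the previous step $\mathcal{R}=W=R_{\sigma^{\ast}}$, precisely the subspace claimed.

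The variation-of-constants computation, the Cayley--Hamilton reduction and the stabilization of the subspace chain are routine. The genuine obstacle is the last step: since an arbitrary union of subspaces need not be a subspace, one has to produce a single switching sequence whose attainable set is all of $W$, and the delicate point is the analyticity/genericity argument ensuring that finitely many generically chosen block durations realize every word exactly, not merely approximate it.
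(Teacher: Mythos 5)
The paper itself offers no proof of this proposition; it is quoted from \cite{Su02,Xi03} and stated with a closing box, so there is no in-paper argument to compare routes with. Judged on its own terms, your first two steps are sound: the variation-of-constants decomposition $R_\sigma=\sum_k\Phi_k\mathcal{R}_{j_k}$, the fact that a single mode reaches its full Kalman subspace at every positive time, and the Cayley--Hamilton/stabilizing-chain identification of the displayed span $W$ as the smallest subspace containing every $\mathcal{B}_i$ and invariant under every $A_i$ (whence $\mathcal{R}\subseteq W$) are all correct and standard.

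The gap is exactly where you flag it, and the patch you propose does not close it. From ``the generators $\Phi_k(\vec\tau)A_{j_k}^{\ell}b$ together with all their $\vec\tau$-derivatives span $W$'' one cannot conclude that for generic $\vec\tau$ the generators alone span $W$: an analytic family of \emph{proper} subspaces can cover all of $W$ while its derivatives saturate $W$ at every parameter value (already the curve $\tau\mapsto(\cos\tau,\sin\tau)$ in $\Real^2$, whose value and derivative span $\Real^2$ everywhere but whose value never does, shows the inference ``value $+$ derivatives span $\Rightarrow$ value generically spans'' is false), and lower semicontinuity of rank only tells you the generic rank is the maximal one, not that the maximal one equals $\dim W$. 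What actually closes the argument is the semigroup structure of switching sequences rather than analyticity in $\vec\tau$: let $V^{\ast}=R_{\sigma^{\ast}}$ have maximal dimension among all reachable subspaces. Appending to $\sigma^{\ast}$ a block of mode $i$ and duration $t$ gives the reachable subspace $e^{tA_i}V^{\ast}+\mathcal{R}_i$, which contains the $\dim V^{\ast}$-dimensional subspace $e^{tA_i}V^{\ast}$ and by maximality cannot exceed it, so $\mathcal{R}_i\subseteq e^{tA_i}V^{\ast}$ for all $t\ge 0$ (hence $\mathcal{R}_i\subseteq V^{\ast}$); appending further a copy of $\sigma^{\ast}$ yields $\Phi_{\sigma^{\ast}}e^{tA_i}V^{\ast}+V^{\ast}$, which by the same dimension count forces $\Phi_{\sigma^{\ast}}e^{tA_i}V^{\ast}=V^{\ast}$ for all $t\ge 0$; comparing with $t=0$ and differentiating gives $A_iV^{\ast}\subseteq V^{\ast}$. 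Thus $V^{\ast}$ contains every $\mathcal{B}_i$ and is invariant under every $A_i$, so $V^{\ast}\supseteq W$, hence $V^{\ast}=W$ and $\mathcal{R}=\bigcup_\sigma R_\sigma=V^{\ast}=W$ is a subspace. Replace your genericity step with this concatenation/maximality argument and the proof is complete.
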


\begin{cor} The system (\ref{slcs}) is globally reachable if
\begin{equation}\label{rv}
 \mathcal{R}=\VecSp. 
\end{equation}
\ending
\end{cor}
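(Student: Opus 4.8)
The plan is to obtain global reachability from the Proposition together with its time-reversed counterpart. Since the data $A_j,B_j$ do not depend on $t$, system (\ref{slcs}) is time-invariant, so it is enough to treat $t_0=0$; and since only the \emph{existence} of a transfer time $T$ is required, there is nothing to optimize. Fix $\tilde x,\hat x\in\VecSp$; the goal is to produce $T>0$, a switching sequence and a control steering $\tilde x$ at time $0$ to $\hat x$ at time $T$.

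The first step is the elementary reversibility remark. If $x(\cdot)$ on $[0,T]$ solves (\ref{slcs}) for the switching law $s(\cdot)$ and control $u(\cdot)$, then $y(t):=x(T-t)$ solves the reversed switched linear system $\dot y=-A_{\bar s(t)}y-B_{\bar s(t)}\bar u(t)$ with $\bar s(t):=s(T-t)$ and $\bar u(t):=u(T-t)$; this is again a system of the form (\ref{slcs}), now with modes $(-A_j,-B_j)$. Consequently, steering $\tilde x$ to $0$ for (\ref{slcs}) is the same, after reversing time, as steering $0$ to $\tilde x$ for the reversed system. Now apply the Proposition to that reversed system: its reachable set from the origin is $\mathrm{span}\{(-A_{m_n})^{\ell_n}\cdots(-A_{m_1})^{\ell_1}\,\mathrm{Im}(-B_{m_1})\}$, and since the $-1$'s only rescale the spanning vectors this is the very same subspace $\mathcal R$. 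Hence, when $\mathcal R=\VecSp$, every point --- in particular $\tilde x$ --- is reachable from $0$ in the reversed system, so $0$ is reachable from $\tilde x$ in (\ref{slcs}): there exist $T_1>0$, a switching sequence $\sigma_1$ and a control $u_1$ that do this.

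The second step applies the Proposition directly to (\ref{slcs}): $\mathcal R=\VecSp$ yields a steering of $0$ to $\hat x$ on some interval $[0,T_2]$ via $(\sigma_2,u_2)$. I would then concatenate the two legs --- run $(\sigma_1,u_1)$ on $[0,T_1]$, then $(\sigma_2,u_2)$ on $[T_1,T_1+T_2]$ --- obtaining one piecewise-continuous control and one switching sequence (still with finitely many switches) that steers $\tilde x$ at time $0$ to $\hat x$ at time $T:=T_1+T_2>0$; this is exactly global reachability.

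The only step that is not bookkeeping --- and thus the one I would single out as the real content --- is the passage from ``reachable from the origin'' to ``controllable to the origin'': for general, non-reversible dynamics this implication fails, and here it works precisely because reversing a trajectory of (\ref{slcs}) is again a trajectory of a switched \emph{linear} system, and because the sign change $(A_j,B_j)\mapsto(-A_j,-B_j)$ leaves the subspace $\mathcal R$ of the Proposition unchanged. A more computational alternative would be to exhibit a single switching sequence $\sigma^\ast$ whose associated Kalman-type reachable subspace already equals $\VecSp$, using the fact that $\mathcal R$ is the smallest subspace containing every $\mathrm{Im}\,B_j$ and invariant under every $A_j$; the reversibility argument avoids that extra bookkeeping.
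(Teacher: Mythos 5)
Your argument is correct: the time-reversal identity holds ($y(t)=x(T-t)$ does solve the switched system with modes $(-A_j,-B_j)$), the sign changes indeed leave the spanning subspace $\mathcal{R}$ unchanged since $\mathrm{Im}(-B)=\mathrm{Im}(B)$, and the concatenation of the two legs is unproblematic because the system is time-invariant. The paper itself gives no proof --- the corollary is stated as immediate from the preceding Proposition --- and the implicit standard route is different from yours: for a fixed switching sequence $\sigma$ of total duration $T$ the endpoint map is affine, $x(T)=\Phi_\sigma\tilde x+L_\sigma u$, where $\Phi_\sigma$ is the (invertible) product of matrix exponentials and $\mathrm{Im}\,L_\sigma$ is the reachable subspace for that particular $\sigma$; choosing one $\sigma$ with $\mathrm{Im}\,L_\sigma=\mathcal{R}=V^n$ (such a single sequence exists by the results of \cite{Su02,Xi03} underlying the Proposition) makes the endpoint map surjective for every initial point $\tilde x$, so any $\hat x$ is reached in one leg. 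That argument is shorter, but it relies on the existence of a single switching sequence realizing all of $\mathcal{R}$, which is a nontrivial ingredient of the cited results; your two-leg argument uses the Proposition only as a statement about membership in the reachable set from the origin (applied once to the given system and once to its time-reversal), which is a mild but genuine economy. Either way the conclusion follows.
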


We will restrict ourselves to the case , where $B_1= \cdots =B_m=B$, so that
\begin{equation}\label{rcon}
% \nonumber to remove numbering (before each equation)
\mathcal{R}:=
\mbox{span}\{A^{\ell_n}_{m_n}\cdots A^{\ell_1}_{m_1}\mathcal{B}\}_{0 \leq \ell_i < n, \ 1\leq m_i \leq m,\ i=1,\ldots,n}, \end{equation}
while the condition of reachability still takes form \eqref{rv}-\eqref{rcon}.

In a particular case of scalar control (single-input) $u(t)$ we set  $B=b \in \VecSp$
in the reachability condition \eqref{rv}-(\ref{rcon}).

\subsection{Algebraic reformulation}
\label{alg_form}

An equivalent reformulation of these conditions is the following.
Let $\VecSp,  \ n>1$ be  vector space, and  $\alg$ be a unital subalgebra of the associative  algebra\footnote{Recall that an algebra (over a field $k$, which in our case will be always either $\Real$ or $\Comp$), is a vector space over $k$ equipped with a multiplication operation. It is called unital is it contains a unity operator.}  $\mathcal{L}(\VecSp)$ of  linear operators on $\VecSp$, generated by  $A_1, \ldots , A_m \in \mathcal{L}(\VecSp)$.

A vector $v \in \VecSp$ is called {\em cyclic} whenever its orbit $\alg v=\{Av| \ A \in \alg\}$ coincides with $\VecSp$.
Similarly a vector subspace $\mathcal{B} \subset \VecSp$ is called cyclic, whenever  $\mbox{span}\{Av| \ A \in \alg, \ v \in \mathcal{B}\}=\VecSp$.

\begin{lem}
The reachability condition \eqref{rv}-(\ref{rcon}) is equivalent to the fact that $\mathcal{B}$ is a cyclic subspace of the algebra $\alg$.  $\square$
\end{lem}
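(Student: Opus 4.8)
The plan is to show that $\mathcal{R}$, as defined by \eqref{rcon}, coincides with $\alg\mathcal{B}:=\mbox{span}\{Av\mid A\in\alg,\ v\in\mathcal{B}\}$; once this identity is in hand the asserted equivalence is immediate, since by definition $\mathcal{B}$ is a cyclic subspace of $\alg$ precisely when $\alg\mathcal{B}=\VecSp$, while the reachability condition \eqref{rv}--\eqref{rcon} reads $\mathcal{R}=\VecSp$. One of the two inclusions is for free: every factor $A^{\ell_i}_{m_i}$ occurring in \eqref{rcon} is an element of $\alg$, hence so is any product $A^{\ell_n}_{m_n}\cdots A^{\ell_1}_{m_1}$ of $n$ of them, and therefore each of the subspaces $A^{\ell_n}_{m_n}\cdots A^{\ell_1}_{m_1}\mathcal{B}$ appearing in \eqref{rcon} is contained in $\alg\mathcal{B}$; thus $\mathcal{R}\subseteq\alg\mathcal{B}$.

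For the reverse inclusion I would run the Krylov/Kalman stabilization argument, adapted to the present many-generator, noncommutative situation. Since $\alg$ is the unital associative algebra generated by $A_1,\dots,A_m$, it is spanned as a vector space by the words $A_{j_p}\cdots A_{j_1}$ (the empty word being the identity), so $\alg\mathcal{B}=\bigcup_{k\ge0}\mathcal{V}_k$, where $\mathcal{V}_k:=\mbox{span}\{A_{j_p}\cdots A_{j_1}v\mid p\le k,\ 1\le j_i\le m,\ v\in\mathcal{B}\}$. One checks directly that $\mathcal{V}_0=\mathcal{B}$ and $\mathcal{V}_{k+1}=\mathcal{B}+\sum_{j=1}^m A_j\mathcal{V}_k$, from which it follows that if $\mathcal{V}_{k+1}=\mathcal{V}_k$ for some $k$ then $\mathcal{V}_{k'}=\mathcal{V}_k$ for all $k'\ge k$. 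Along the nondecreasing chain $\mathcal{V}_0\subseteq\mathcal{V}_1\subseteq\cdots\subseteq\VecSp$ every strict inclusion raises the dimension by at least one, and $\dim\VecSp=n$, so the chain must become stationary no later than at $\mathcal{V}_n$; hence $\alg\mathcal{B}=\bigcup_{k\ge0}\mathcal{V}_k=\mathcal{V}_n$.

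It remains to check $\mathcal{V}_n\subseteq\mathcal{R}$. Take any word $A_{j_p}\cdots A_{j_1}$ with $p\le n$; writing each of its $p$ letters as $A_{j_i}=A^1_{j_i}$ (legitimate since $n>1$ forces $1<n$) and padding on the left with $n-p$ copies of the identity in the form $A^0_{m_i}=I$, we obtain a product $A^{\ell_n}_{m_n}\cdots A^{\ell_1}_{m_1}$ with all exponents in $\{0,1\}\subseteq\{0,\dots,n-1\}$ and exactly $n$ factors, which is precisely the shape admitted in \eqref{rcon}. Therefore $A_{j_p}\cdots A_{j_1}\mathcal{B}\subseteq\mathcal{R}$ for every word of length at most $n$, and since these subspaces generate $\mathcal{V}_n$ we get $\mathcal{V}_n\subseteq\mathcal{R}$. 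Combining with the first paragraph, $\mathcal{R}=\alg\mathcal{B}$, and the lemma follows.

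I do not expect a genuine obstacle here: the only points requiring care are the verification of the recursion $\mathcal{V}_{k+1}=\mathcal{B}+\sum_j A_j\mathcal{V}_k$ together with its ``once equal, always equal'' consequence, and the translation between the bound ``word length $\le n$'' produced by the stabilization argument and the ``$n$ blocks with exponents $<n$'' normal form prescribed by \eqref{rcon}; both are bookkeeping rather than substance.
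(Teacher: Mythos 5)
Your argument is correct and is exactly the stabilization argument that the paper treats as immediate (the lemma is stated with no written proof): the easy inclusion $\mathcal{R}\subseteq\alg\mathcal{B}$ plus the observation that the Krylov chain $\mathcal{V}_0\subseteq\mathcal{V}_1\subseteq\cdots$ saturates by step $n$, so that words of length at most $n$ already span $\alg\mathcal{B}$ and fit into the normal form of \eqref{rcon}. Nothing is missing; the bookkeeping steps you flag (the recursion, the ``once equal, always equal'' consequence, and the padding with zero exponents, which uses $n>1$) all check out.
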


The following {design} problem refers to the question of {\em existence } of a control of certain dimension,  rendering a switching system reachable.

{\bf Question.} {\it Given switched  linear uncontrolled dynamics - a set of $(n \times n)$-matrices $A_1, \ldots , A_m$, when is it possible to control the respective switched control systems  by a single (resp. $r$-dimensional) control?}

Equivalently, this question can be reformulated as follows:

{\it Does the associative algebra $\mathfrak{A}$  generated by the matrices $A_1, \ldots , A_m$ admit a cyclic vector (resp. $r$-dimen\-sional cyclic subspace)?}

According to Burnside theorem algebra $\mathfrak{A} \subset \mathcal{L}(\VecSp)$ is {\em transitive}, i.e. every nonzero vector is cyclic, if and only if $\alg=\mathcal{L}(\VecSp)$.

If a cyclic vector exists, it is hard to miss:

\begin{prop}
The set of cyclic vectors $b \in \VecSp$ for an associative matrix algebra $\alg$ is either empty or is an open dense set in $\VecSp$ (in fact, a complement to an algebraic hypersurface).\ending
\end{prop}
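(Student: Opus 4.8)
The plan is to exhibit the set of cyclic vectors as the complement of the zero set of a single polynomial, so that the dichotomy (empty versus open dense) follows from the irreducibility of $\VecSp \cong \Real^n$ (or $\Comp^n$) as an affine variety. First I would fix a linear basis $A^{(1)}, \ldots, A^{(N)}$ of the finite-dimensional vector space $\alg \subset \mathcal{L}(\VecSp)$ (here $N = \dim \alg \le n^2$). A vector $v \in \VecSp$ is cyclic precisely when $\alg v = \mbox{span}\{A^{(1)}v, \ldots, A^{(N)}v\} = \VecSp$, i.e. when the $n \times N$ matrix $M(v)$ whose columns are $A^{(1)}v, \ldots, A^{(N)}v$ has rank $n$. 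Since necessarily $N \ge n$ whenever a cyclic vector exists (and if $N < n$ the cyclic set is empty and there is nothing to prove), the rank condition is equivalent to the non-vanishing of at least one of the $n \times n$ minors of $M(v)$.

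Next I would observe that each entry of $M(v)$ is a linear form in the coordinates of $v$, hence each $n \times n$ minor of $M(v)$ is a homogeneous polynomial of degree $n$ in $v$. Let $p_1(v), \ldots, p_L(v)$ enumerate these minors. Then $v$ is \emph{not} cyclic if and only if $p_1(v) = \cdots = p_L(v) = 0$. To collapse this to a single hypersurface, I would take a generic linear combination $P = \sum_j c_j p_j$: the key point is that if the common zero set $Z$ of the $p_j$ is a proper subset of $\VecSp$, then (because $\VecSp$, as affine space, is an irreducible variety) a generic linear combination of the $p_j$ still vanishes on $Z$ but not on all of $\VecSp$, so $\{P = 0\}$ is a proper algebraic hypersurface containing $Z$. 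Alternatively, and perhaps cleaner over $\Real$, one can use $P = \sum_j p_j^2$, whose real zero set equals $Z$ exactly; I would present whichever is more economical. In either case the non-cyclic vectors are contained in a proper algebraic hypersurface.

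The dichotomy then falls out: if there are no cyclic vectors, the set is empty, the first alternative. Otherwise the hypersurface $\{P=0\}$ is proper (it misses at least one point, the cyclic one), so its complement is open (Zariski, hence Euclidean) and dense, since a nonzero polynomial cannot vanish on a nonempty open subset of irreducible affine space; this complement is exactly (in the $P = \sum p_j^2$ formulation) or contains (in the generic-combination formulation, where one should then refine $P$ to cut out $Z$ exactly) the cyclic set. I would close by noting that in fact $Z$ itself is already cut out as the vanishing locus of the single minor-combination one obtains after this refinement, giving the parenthetical claim.

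The main obstacle is the passage from "intersection of several hypersurfaces" to "a single hypersurface" while keeping the zero set \emph{exactly} the non-cyclic locus rather than merely a superset of it. The $\sum p_j^2$ trick handles this immediately over $\Real$; over $\Comp$ (or to stay within the category of polynomial, not just real-analytic, hypersurfaces) one must argue that the ideal generated by the $p_j$ defines a hypersurface, which is not automatic for an arbitrary intersection — but here it suffices to know the complement is open dense, and for the sharper parenthetical statement one invokes that the non-cyclic locus, being defined by the vanishing of all maximal minors, is the determinantal variety which (when nonempty and proper) is well known to be of pure codimension controlled by the rank drop; in the generic situation relevant here it has codimension one. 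I would keep the argument at the level of "complement of an algebraic hypersurface" and remark that the precise scheme structure is the standard determinantal one.
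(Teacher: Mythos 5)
Your argument is correct: cyclicity of $v$ is the condition that the $n\times N$ matrix with columns $A^{(1)}v,\ldots,A^{(N)}v$ (for a linear basis of $\alg$) has full rank $n$, the non-cyclic locus is the common zero set of the $n\times n$ minors, which are polynomials in $v$, and if one cyclic vector exists this locus is a proper algebraic subset, whence its complement is open and dense. The paper states this proposition without proof, and your argument is exactly the standard one it implicitly relies on; your care about collapsing the minor conditions to a single polynomial (via $\sum_j p_j^2$ over $\Real$, or a product/generic combination otherwise) in order to justify the parenthetical ``complement to an algebraic hypersurface'' is, if anything, more precise than the paper's phrasing.
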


Similarly, if the set of cyclic $r$-dimensional subspaces of $\alg$ is non-empty,  then it  is an open dense subset of the corresponding Grassmanian. Therefore if switched linear dynamics is controllable by means of a $r$-dimensional  control
for some control operator $B$, then a {\em generic operator} would work.

\subsection{Motivation II: reachability/controllability for nonlinear mechanical control systems}
\label{mot2}

There are many examples of  (controlled) mechanical systems, which  are invariant with respect to a (linear) action of a (matrix) Lie group $G$. Often $G$ plays also the role of the configuration space (with $G$ acting on itself by left multiplication). A typical example is the rotation (attitude motion) of a rigid body (satellite) about its center of mass. Here the configuration space can be identified with $G=\SO(3)$ - the special orthogonal group of orientation preserving rotations in $\Real^3$. This construction generalizes to  {\em multidimensional rigid body (MRB)} with the configuration space $G=\SO(n)$\cite{Arn,FK}.

An infinite-dimensional analogue is Euler (or Navier-Stokes) model for the motion of incompressible fluid, whose configuration space is the (infinite-dimensional) group $G=\mathit{SDiff} M$ of volume-preserving diffeomorphisms of a domain $M$ (see \cite{Arn}).

A left-invariant Riemannian metric  on the group $G$ defines the (left-invariant) geodesic dynamics.
In this case the tangent bundle - the phase space - $TG$ can be identified with $G \times \mathfrak{g}$, where $\mathfrak{g}$ is the Lie algebra of $G$. The left-invariant Riemannian metric is defined by a scalar product $\langle \cdot , \cdot \rangle$ on the velocity space $\mathfrak{g} \cong T_eG$.

In the case of rotation of the rigid body this Riemannian metric is given by the quadratic form, whose coefficients are components of the
  {\it inertia tensor} of the body.

To describe the dynamics on the phase space we introduce
a bilinear map $\euler:\mathfrak{g} \times \mathfrak{g} \mapsto \mathfrak{g}$
 defined by  the formula (\cite{Arn}):
 \begin{equation}\label{definition_cal_B}
   \langle x, [y,z] \rangle =\langle \euler(x,y),z \rangle ,
 \end{equation}
where $[y,z]$ is the Lie bracket on $\mathfrak{g}$. In the finite-dimensional cases, we deal with, we may think of it as of the matrix commutator.

The evolution of mechanical system  is then given by a system of coupled kinematic and dynamic equations
\begin{equation}
% \nonumber to remove numbering (before each equation)
  \dot{q}= q\circ v , \
  \dot{v} = \euler(v,v), \ q \in G, v \in \mathfrak{g},  \label{uncon}
\end{equation}
where $q\circ v$ stays for the left translation of $v$ by $q$.

We concentrate on  the second {\it dynamic} equation, which describes the evolution of the velocity (it is the {\em in-body} instantaneous angular velocity in the rigid body problem)
in the {\em Lie} algebra $\mathfrak{g}$, a linear space.

%%%%%%%%%%%%%%%%%%%%%%%%%%%%%%%%%%%%%%%%%%%%%%%%%%%%%%%%%%%%%%%%%%%%%
%%%%%%%%%%%%%%%%%%%%%%%%%%%%%%%%%%%%%%%%%%%%%%%%%%%%%%%%%%%%%%%%%%%%%%

Besides the geodesic dynamics, the dissipation forces can be present. We model them  by a linear term $Dv$.

We introduce the control into the system via {\it generalized forces} $b_j$ so that it  becomes
\begin{equation}\label{dycon}
  \dot{v} = \mathcal{E}(v,v)+Dv+ \sum_{j=0}^r b_j u_j(t), \ (u_1, \ldots , u_r) \in \mathbb{R}^r.
\end{equation}
Often the forces $b_j$ depend on the configuration $q$, but for simplicity of the exposition
we choose them constant linearly independent vectors  $b_j \in \mathfrak{g},\ j=1, \ldots , r$.

Whenever $\mathcal{B}=\mbox{Span}\{b_j, \ j=1, \ldots ,r\}$ coincides with the whole  space $\mathfrak{g}$, the reachability of the equation  \eqref{dycon} is evident.
 If $r < \dim  \mathfrak{g}$, the reachability requires an examination.

An approach to studying reachability, advanced in \cite{AgS, BL, Sa09}, is
based on {\it Lie extensions}. For the equation (\ref{dycon}) the method of Lie extensions
allows one to add to a pair
of control vector fields $\hat b, b $  a new {\it extending control vector field}
\begin{equation}\label{liex}
b_e=\mathcal{E}(\hat b, b)+ \mathcal{E}(b,\hat b),
\end{equation}
  whenever
$\mathcal{E}(\hat b, \hat b)=0 \ (\mbox{mod} \ \mathcal{B})$.

The key point is that such  extension does not change the (closure of the) reachable set.
If after  series of  extensions  the  original and the extending control vector
fields would span $\mathfrak{g}$, then  reachability would be verified.

This provides  {\it Lie rank criterion of reachability}.
Note that the computation of the iterated extensions and tracing their spans is in general a difficult problem.

We  reformulate the iterated Lie extension procedure in terms of associative matrix algebras and cyclic subspaces.

Assume  that some of the control vector fields, say  $\hat b_1, \ldots , \hat  b_{r_0},$ satisfy the conditions
$\mathcal{E}(\hat b_i,\hat b_i)=0  \ (\mbox{mod} \ \mathcal{B})$,  while $b_{r_0+1}, \ldots , b_r$  are arbitrary.
Consider  the linear operators $A_j: \mathfrak{g} \mapsto \mathfrak{g}$:
\begin{equation}\label{AL}
A_j b = \mathcal{E}(\hat b_j,b)+\mathcal{E}(b,\hat b_j), \ j=1, \ldots , r_0.
\end{equation}

 By  definition application of the linear operator $A_j$ to each $b_k \in \mathcal{B}$ results in Lie extension \eqref{liex} and iterations of this application correspond to the iterations of the Lie extensions.
Therefore the Lie rank criterion for reachability can be formulated as the following
\begin{prop}
If the  $\mathcal{B} \subset \mathfrak{g}$ is a cyclic subspace
for the associative algebra generated by the
the linear maps \eqref{AL},  then (\ref{dycon}) is globally reachable. \ending
\end{prop}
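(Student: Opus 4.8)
The plan is to recognize the cyclicity hypothesis as precisely the Lie rank condition of reachability, through the dictionary noted just before the statement: applying $A_j$ to a control direction is one Lie extension \eqref{liex}, and composing such applications iterates extensions.

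First I would unwind the algebra. Since $\alg$ is the \emph{unital} associative algebra generated by $A_1,\dots,A_{r_0}$, it is the linear span of the identity together with all finite products $A_{j_\ell}\cdots A_{j_1}$ with $1\le j_i\le r_0$; because $\dim\alg\le(\dim\liealg)^2<\infty$, the span of the products of length at most $\ell$ stabilizes at some finite $\ell=N$. Hence the cyclicity of $\mathcal{B}$ is the statement
\[
\liealg=\mathcal{B}+\mbox{span}\{A_{j_\ell}\cdots A_{j_1}b_k:\ 1\le\ell\le N,\ 1\le j_i\le r_0,\ 1\le k\le r\}.
\]

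Next I would run the extensions in rounds, letting $\mathcal{B}_p$ be the span of the control directions available after $p$ rounds, with $\mathcal{B}_0=\mathcal{B}$. In a single round one may, for every $j\le r_0$ and every current control direction $b$, adjoin $A_jb=\euler(\hat b_j,b)+\euler(b,\hat b_j)$ by \eqref{liex}: the admissibility condition $\euler(\hat b_j,\hat b_j)=0\ (\mbox{mod}\ \mathcal{B}_p)$ holds because it already holds modulo $\mathcal{B}\subseteq\mathcal{B}_p$, and $\hat b_j$ is still a control direction since $\hat b_1,\dots,\hat b_{r_0}$ are among the original ones. By linearity of $A_j$ one gets $\mathcal{B}_p\supseteq\mathcal{B}+\mbox{span}\{A_{j_\ell}\cdots A_{j_1}b_k:\ \ell\le p\}$, so $\mathcal{B}_N=\liealg$ by the displayed identity. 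Thus after finitely many extensions the control directions span $\liealg$, and the Lie rank criterion of reachability quoted above gives global reachability of \eqref{dycon}.

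The argument is short because the analytic content is carried entirely by the two facts imported from \cite{AgS,BL,Sa09}: that an elementary Lie extension does not change the closure of the reachable set, and that control directions spanning $\liealg$ force (exact) global reachability of the quadratic dynamics \eqref{dycon}. Granting these, the only care needed is over what is actually used above — that the condition $\euler(\hat b_j,\hat b_j)=0\ (\mbox{mod}\ \mathcal{B})$ only becomes easier as the span grows, and that it suffices to keep the fixed vectors $\hat b_1,\dots,\hat b_{r_0}$ in the first slot of $\euler$ throughout, so that the directions produced by iterating extensions are exactly the $A_{j_\ell}\cdots A_{j_1}b_k$, whose span together with $\mathcal{B}$ is $\liealg$.
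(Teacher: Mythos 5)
Your proposal is correct and follows essentially the same route as the paper, which states this proposition as an immediate consequence of the preceding paragraph: applying $A_j$ realizes one Lie extension \eqref{liex}, iterated products realize iterated extensions, so cyclicity of $\mathcal{B}$ is exactly the Lie rank condition. Your write-up merely makes explicit the bookkeeping (stabilization of the span, persistence of the admissibility condition $\euler(\hat b_j,\hat b_j)=0\ (\mathrm{mod}\ \mathcal{B})$ as the span grows) that the paper leaves implicit.
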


 Once again we see that verification of controllability property in the nonlinear case has been transformed in a problem of a presence of a cyclic subspace  for an associative
 matrix (operator) algebra.

\subsection{Prior work}
Despite enormous amount  of literature on the controllability of linear (and nonlinear) systems, the applications of the theory of associative non-commutative algebras and their representations are rare. One can see certain precursors to the approach we undertook here in M.Arbib's papers (see, e.g. \cite{AZ}), but our focus on the general structure theory of the (non-commutative) associative algebras and their representations is missing there. The quiver-based approach, alluded to below also was glimpsed in the 70-80-ies (see \cite{Ha77, Hi87}), but again, mostly in the context of commutative algebras. The key triangular block decomposition results in \cite{RR} were not, it seems, used in the control-theoretic context before.

It should be noted, however, that the general theory of {\em Lie algebras} in the context of (often, non-linear) controllability has been used for a  long time (see, e.g. \cite{Br73}). The role of the solvability of the Lie algebras generated by the linear operators, which determine a switching dynamics, was emphasized in \cite{AgBL}.

The examples that motivated our study are coming from well-established areas. Thus, the theory of switched systems has developed into a comprehensive area with a variety of results known, see \cite{L}. The questions of controllability of switched linear systems were addressed recently in \cite{Su01,Su02,Xi03}; our focus on the design problems, and the deployment of the toolbox of the theory of associative algebras is new. Similarly, the questions of controllability in equivariant mechanical systems, including the infinite-dimensional ones, like Euler and Navier-Stokes systems of fluid dynamics) attracted a lot of attention recently (see \cite{AgS,BL,Sa09}), but without the algebraic formalism we exploit here.

\subsection{Plan of the paper}
The rest of the note is organized as follows: In Section \ref{cyclic} we introduce the necessary algebraic results (in particular, the key block-triangularization theorem and formulate our main results. In Section \ref{seccor} several implications of the main theorems are deduced, resulting in (reasonably) practical conditions for cyclicity of (representations) of algebras. Several examples of applications of such conditions for the (generalized) controlled rigid bodies are given in Section \ref{sec_ex}. Appendix with the proof of the main result concludes the note.

\section{Cyclic vectors for an associative finite-dimensional algebra}\label{cyclic}
We will work over the complex numbers (all results are valid over the reals, but the decompositions below require complexifications.)

Let $\VecSp \cong \mathbb{C}^n, \ n>1$ be  vector space, and  $\alg$ be a unital subalgebra of the algebra $\mathcal{L}(\VecSp)$ of the linear operators on $\VecSp$ and  $A_1, \ldots , A_s \in \mathcal{L}(\VecSp)$ be generators of $\alg$.

We consider $\VecSp$ as a {\em representation} $\rho$ of an associative algebra $\mathcal{A}$  with generators $a_1, \ldots , a_s$,
and $\rho(a_j)=A_j \in \mathcal{L}(\VecSp), j=1, \ldots ,r$.

Recall that representation $\VecSp$ is called {\em irreducible} if $\alg=\rho(\mathcal{A})$  does not possess
nontrivial invariant subspaces, which implies that every vector $x \in V$ is cyclic.
According  to Burnside's theorem \cite{RR} a nontrivial representation $\VecSp$ is irreducible
if and only if  $\rho(\mathfrak{A})=\mathcal{L}(\VecSp)$.

Representations that possess a cyclic vector are  called cyclic \cite{Et}. Ideally, we aim at understanding, what conditions on the generators $A_i$'s can guarantee the cyclicity. While in general this seems to be a hard problem, in several situations cyclicity can be derived from simple invariants of the representations.

\subsection{Triangular decompositions}
We start with an important result which introduces  block-triangular structure for  unital matrix algebra $\alg$.

\begin{prop}[\cite{RR}]
Let $\alg$ be a unital subalgebra of $\mathcal{L}(\VecSp)$. Then there exists a basis of $\VecSp$  and a partition $\VecSp=V_1 \oplus \cdots \oplus V_k$, such that in that basis matrices of operators $A \in \alg$ have a
block-triangular structure
\begin{equation}\label{btf}
\left(
  \begin{array}{ccccc}
    A_{11} & A_{12}  & \cdots & \cdots & A_{1k} \\
    0 & A_{22} & & \cdots & A_{2k} \\
    0 & 0 & A_{33} & \cdots & \cdots \\
    \cdots & \cdots & \cdots & \cdots & \cdots \\
    0 & 0 & 0 & 0 & A_{kk}\\
  \end{array}
\right)
\end{equation}
and  $\{1, \ldots ,k\}$  can be  partitioned
into a disjoint union $J_1 \bigcup \cdots \bigcup J_\ell$,
of {\em isomorphism classes}
so that:
\begin{enumerate}
  \item[i)] for each $i:  \  \{A_{ii}| \ A \in \alg\}=\mathcal{L}(V_i)$;
  \item[ii)] for each $i,j \in J_s$ and each  $A \in \alg$: $A_{ii}=A_{jj}$;
  \item[iii)] for each $i \in J_s$  there exists $A \in \alg$, such that $A_{ii}=E$, while $A_{jj}=0$ for $j \not\in J_s$;
  \item[iv)] if $i  \in s,\ j \in J_t$ and $J_s \neq J_t$, then $\{(A_{ii},A_{jj})| \ A \in \alg\}=\mathcal{L}(V_i) \times \mathcal{L}(V_j). \ $
 \end{enumerate}
\hfill$\square$
\end{prop}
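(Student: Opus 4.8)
The plan is to produce the flag from a composition (Jordan--Hölder) series of $\VecSp$ viewed as a module over $\alg$. Fix one chain of $\alg$-submodules $0=W_0\subset W_1\subset\cdots\subset W_k=\VecSp$ whose successive quotients $V_i:=W_i/W_{i-1}$ are irreducible $\alg$-modules, and build a basis of $\VecSp$ by successively extending bases of $W_1,W_2,\dots$; identifying each $V_i$ with the corresponding coordinate subspace gives the decomposition $\VecSp=V_1\oplus\cdots\oplus V_k$ and, since every $W_i$ is $\alg$-invariant, the block-triangular form (\ref{btf}), in which $A_{ii}$ is exactly the operator induced by $A$ on $V_i$. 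Declare $i\sim j$ when $V_i$ and $V_j$ are isomorphic as $\alg$-modules and let $J_1,\dots,J_\ell$ be the equivalence classes; note that only one fixed series is needed, not Jordan--Hölder uniqueness.

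Item (i) is then immediate from Burnside's theorem: $\{A_{ii}\mid A\in\alg\}$ is a subalgebra of $\mathcal{L}(V_i)$ acting irreducibly, hence all of $\mathcal{L}(V_i)$. For (ii) I would, for each class $J_s$, first fix one representative simple module $S_s$ with a chosen basis together with $\alg$-isomorphisms $S_s\xrightarrow{\ \sim\ }V_i$ for every $i\in J_s$, and then use these transported bases when assembling the basis of $\VecSp$ above; with this choice $A_{ii}$ is, for every $i\in J_s$, literally the matrix of the action of $A$ on $S_s$, so the diagonal blocks within a class coincide. This is precisely where working over $\Comp$ is used: Schur's lemma makes the $V_i$ rigid (the chosen isomorphisms differ only by scalars, so the construction is unambiguous).

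The substance is in (iii) and (iv), and here I would pass to the \emph{diagonal image} $\bar{\alg}$, the image of $\alg$ under $A\mapsto(A_{11},\dots,A_{kk})\in\prod_i\mathcal{L}(V_i)$. By construction $\bar{\alg}$ acts faithfully on $M:=\bigoplus_i V_i$, which is a semisimple module; a finite-dimensional algebra carrying a faithful semisimple module has zero Jacobson radical and is therefore semisimple, so Wedderburn's theorem gives $\bar{\alg}\cong\prod_{s=1}^{\ell}M_{d_s}(\Comp)$, with the factors naturally indexed by the isomorphism classes $J_s$ (and $d_s=\dim V_i$ for $i\in J_s$), the $s$-th projection being $A\mapsto A_{ii}$ for any $i\in J_s$ (well defined by (ii)). Then (iii) is just the central idempotent $e_s\in\bar{\alg}$ that is the identity in the $s$-th factor and zero elsewhere, lifted to any $A\in\alg$ above it: $A_{ii}=E$ for $i\in J_s$ and $A_{jj}=0$ for $j\notin J_s$. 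For (iv), if $i\in J_s$ and $j\in J_t$ with $J_s\neq J_t$, then $A\mapsto(A_{ii},A_{jj})$ factors through the projection of $\prod_r M_{d_r}(\Comp)$ onto its two distinct factors $M_{d_s}(\Comp)\times M_{d_t}(\Comp)\cong\mathcal{L}(V_i)\times\mathcal{L}(V_j)$, which is visibly onto; alternatively one can argue (iv) directly by the Jacobson density theorem applied to $V_i\oplus V_j$, whose $\alg$-endomorphism ring is $\Comp\times\Comp$ since $V_i\not\cong V_j$ are simple over $\Comp$, so the double centralizer is $\mathcal{L}(V_i)\times\mathcal{L}(V_j)$.

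I expect the main obstacle to be conceptual rather than computational: the modules $M=\bigoplus_i V_i$ and $V_i\oplus V_j$ are not themselves subquotients of $\VecSp$ sitting inside a single flag, so (iii)--(iv) cannot be read off the triangular picture directly; one must instead work with the abstract algebra $\bar{\alg}$ and invoke the semisimplicity/density machinery, which is the heart of the argument. The only other delicate point is arranging mutually compatible bases within each isomorphism class so that (ii) holds on the nose, but that is bookkeeping.
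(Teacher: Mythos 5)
The paper does not prove this proposition at all: it is quoted verbatim from Radjavi--Rosenthal \cite{RR} and stated with the proof omitted, so there is nothing in the text to compare your argument against. Your proof is correct and is the standard derivation: a composition series gives the flag and the block-triangular form, Burnside (over $\Comp$) gives (i), transporting bases along $\alg$-isomorphisms within each isomorphism class gives (ii) (well-defined by Schur), and passing to the diagonal image $\bar{\alg}$ --- which is semisimple because it acts faithfully on the semisimple module $\bigoplus_i V_i$, hence is a product of matrix algebras indexed by the classes $J_s$ --- yields (iii) via the central idempotents and (iv) via surjectivity onto pairs of distinct factors (equivalently, Jacobson density on $V_i\oplus V_j$). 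You also correctly identify the one genuinely non-cosmetic point, namely that (iii)--(iv) concern the associated graded module rather than the flag itself and therefore require the semisimplicity of $\bar{\alg}$; no gaps.
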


\subsection{Main result}
Now we can formulate  sufficient condition for existence of  cyclic vectors in terms of block-diagonal structure \eqref{btf}.

\begin{thm}\label{k<d}
If for each isomorphism class $J_s$
\begin{equation}\label{dimgemult} \dim V_j \geq |J_s|,
\end{equation}
(here $j\in J_s$),
then the cyclic vectors of the algebra $\mathcal{A} \subset \mathcal{L}(V)$
form dense open subset in $V$.
\hfill$\square$
\end{thm}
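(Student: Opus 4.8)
The plan is to reduce the problem to understanding cyclic vectors for the block-triangular structure \eqref{btf} provided by the previous proposition, and to build a cyclic vector inductively up the block-triangular flag. Since the set of cyclic vectors is always either empty or open dense (by the Proposition on cyclic vectors already stated), it suffices to exhibit \emph{one} cyclic vector under the hypothesis \eqref{dimgemult}. Write $V = V_1 \oplus \cdots \oplus V_k$ and let $d_i = \dim V_i$. A vector $v = (v_1,\ldots,v_k)$ with each $v_i \in V_i$; the orbit $\alg v$ projects onto $V_k$ (the bottom block) via the action of $\{A_{kk}\}$ which is all of $\mathcal{L}(V_k)$, so the question is whether, having controlled the bottom blocks, the off-diagonal couplings let us fill the higher blocks. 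The natural induction is on $k$: assume the $v_i$ for $i$ in all blocks below some level have been chosen so their span (under the quotient algebra acting on $V_{\geq \text{that level}}$) is everything, and lift.

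First I would make precise the only genuine constraint, which comes from the isomorphism classes. Within a single class $J_s = \{i_1,\ldots,i_p\}$ (with $p = |J_s|$ and all $V_{i_q}$ of a common dimension $d \geq p$ by hypothesis), property (ii) forces $A_{i_q i_q}$ to be the \emph{same} operator $T \in \mathcal{L}(V_{i_q})$ for all $q$ simultaneously — so the diagonal action on $V_{i_1}\oplus\cdots\oplus V_{i_p}$ is the "diagonal" copy of $\mathcal{L}(\mathbb{C}^d)$ acting on $(\mathbb{C}^d)^{\oplus p}$. A single vector $(w_1,\ldots,w_p) \in (\mathbb{C}^d)^{\oplus p}$ is cyclic for this diagonal action precisely when $w_1,\ldots,w_p$ are \emph{linearly independent} in $\mathbb{C}^d$: indeed $\{(Tw_1,\ldots,Tw_p) : T\in\mathcal{L}(\mathbb{C}^d)\}$ has dimension $dp$ iff the $w_q$ are independent, and that forces $d \geq p$. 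This is exactly where \eqref{dimgemult} is used, and it shows the hypothesis is the sharp local condition; the generic choice of $w_q$'s is independent. Property (iv) then says that distinct isomorphism classes behave independently (the pair of diagonal blocks ranges over the full product), so across classes there is no obstruction at the diagonal level.

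Next I would handle the off-diagonal part. The strategy: order the blocks so that going "down" means going to later indices, and build $v$ from the bottom up. Having arranged that the projection of $\alg v$ onto the quotient representation on the lower blocks is surjective, I want to choose the next component $v_i$ so that the orbit surjects onto $V_i$ as well. The mechanism is property (iii): for $i \in J_s$ there is $A\in\alg$ with $A_{ii} = E$ and $A_{jj} = 0$ for $j \notin J_s$; using such elements together with a chain-condition / Nakayama-type argument on the radical (the strictly block-upper-triangular part, which is the Jacobson radical of $\alg$), one shows that the obstruction to lifting lives in $V_i$ modulo the contributions already generated, and that a generic $v_i$ kills it. Concretely one argues that $\alg v \cap V_i$ surjects onto $V_i$ because the "leading" coupling operators $A_{ij}$ (for $j$ just below $i$) can be taken to range over enough of $\mathrm{Hom}(V_j,V_i)$, by (i) and (iv), to reach any target once $v_j$ is already a cyclic generator downstairs. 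I would phrase the whole induction as: the set of $v$ failing to be cyclic is contained in a finite union of proper algebraic subvarieties (one per "linear independence within a class" condition, plus the lifting conditions), hence the complement is open dense.

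The main obstacle I expect is the bookkeeping in the inductive lifting step — making rigorous that "once the lower blocks are cyclically generated, a generic choice in the current block works" without circular reasoning, i.e. correctly identifying the quotient algebra acting on the top blocks and checking it still satisfies the hypotheses of the block-triangularization proposition so the induction closes. The subtlety is that quotienting by a lower invariant subspace can merge or alter isomorphism classes, so one must verify that the inequality \eqref{dimgemult} is inherited (which it is, since dimensions of the $V_j$ and the sizes $|J_s|$ only restrict the sub-collection of indices that survive in the quotient). The diagonal/class analysis of the second paragraph is the conceptual heart and the off-diagonal lifting is the technical heart; once both are in place, invoking the open-dense dichotomy for cyclic vectors finishes the proof.
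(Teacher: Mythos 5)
Your second paragraph is essentially the heart of the paper's proof: the diagonal copy of $\mathcal{L}(\mathbb{C}^d)$ acting on $(\mathbb{C}^d)^{\oplus p}$ has a cyclic vector exactly when the $p$ components can be chosen linearly independent, which needs $p\leq d$; the paper realizes this with matrix units $E_{ij}$ and an induction producing $x=x_1+\cdots+x_k$ with the $\rho(E_{ij})x_j$ independent. The gap is in your third paragraph, the ``off-diagonal lifting.'' The mechanism you propose --- that by (i) and (iv) the coupling operators $A_{ij}$ ``can be taken to range over enough of $\mathrm{Hom}(V_j,V_i)$'' --- has no support: properties (i)--(iv) of the block-triangularization constrain only the diagonal blocks and say nothing whatsoever about the $A_{ij}$, $i\neq j$, which may for instance all vanish. (That the theorem holds in the block-diagonal case already shows the couplings cannot be the engine of the proof.) As written, the inductive lifting step is a plan with acknowledged obstacles, not an argument, and the worry about isomorphism classes merging in quotients is a symptom of attacking the triangular structure head-on when it can be bypassed entirely.

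The paper's way around this is the Wedderburn principal theorem: $\mathfrak{A}=\mathfrak{S}+\mathfrak{N}$ with $\mathfrak{N}=\mathrm{Rad}(\mathfrak{A})$ the strictly block-triangular part and $\mathfrak{S}$ a \emph{semisimple subalgebra}. Restricted to $\mathfrak{S}$, the module $V$ is completely reducible with irreducible constituents of dimensions $\dim V_j$ occurring with multiplicities $|J_s|$, so your linear-independence criterion applies verbatim and produces a vector $x$ with $\mathfrak{S}x=V$; since $\mathfrak{S}\subset\mathfrak{A}$, a fortiori $\mathfrak{A}x=V$. The off-diagonal blocks are never used --- they can only enlarge the orbit. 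Your passing mention of a Nakayama-type argument could be developed into a complete alternative (a vector $v$ is cyclic for $\mathfrak{A}$ iff its image in $V/\mathfrak{N}V$ is cyclic for the semisimple quotient $\mathfrak{A}/\mathfrak{N}$, and the multiplicities in $V/\mathfrak{N}V$ are bounded by the $|J_s|$), but it would then have to replace, not supplement, the coupling-based induction.
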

Proof of the result  is presented in Appendix.

Note that the condition formulated in the theorem becomes also necessary whenever   block-triangular representation \eqref{btf} is block-diagonal.

Indeed, assume that  off-diagonal blocks vanish and the condition, formulated in Theorem \ref{k<d} is violated, say,  $\dim V_j  < |J_s|$ for $j \in J_s$.

Assume there is a cyclic vector $v \in \VecSp$. Splitting $v$ in a sum $v=v_1 + \cdots + v_k, v_i\in V_i,$ we note that each $v_j \neq 0$. Cyclicity means that for any $y=y_1+\cdots +y_k\in \VecSp$ there exists an element of $A \in \alg$ such that $Av=y$, that is
   $$ A_{ii}v_i=y_i, \ i=1, \ldots ,k, $$
and in particular
\begin{equation}\label{dis}
Fv_i=y_i, \forall i \in J_s,
\end{equation}
  where $F=A_{ii}$  for  $i \in J_s$ (as all representations in $J_s$ are isomorphic).

Since the representations of $\alg$ defined by the restrictions to $V_i, \ i \in J_s$ are all isomorphic, we can identify them,
thinking of all $v_i$ belonging to the same subspace $\tilde{V}$ and $F$ belonging to $\mathcal{L}(\tilde{V})$.
As $|J_s| > \dim \tilde{V}$, one sees that the vectors $Fv_i$ in \eqref{dis} are linearly dependent.

Let
 $\sum_{i \in J_s}\alpha_i Fv_i=0$. Then, choosing
$y_i$,  such that $\sum_{i \in J_s}\alpha_i y_i \neq 0$  we get an  incompatibility of the  system \eqref{dis}, and hence the vector $x$ fails to be  cyclic.

In the presence of the off-diagonal blocks the condition, formulated in the Theorem, ceases to be necessary as the following example shows.

\begin{exmp}
Consider two associative algebras $\alg, \tilde{\alg}$  of upper-triangular matrices
$$     \alg=\left\{\left(
      \begin{array}{ccc}
        a & b & c \\
        0 & a & b \\
        0 & 0 & a \\
      \end{array}
    \right)\right\}_{a,b,c}, \
\tilde{\alg}=\left\{\left(
      \begin{array}{ccc}
        a & b & c \\
        0 & a & 0 \\
        0 & 0 & a \\
      \end{array}
    \right)\right\}_{a,b,c},
 $$
whose block-diagonal structures coincide: there is a unique class $J_1$ with $|J_1|=3$. Since the
diagonal blocks $A_{ii}, \tilde{A}_{ii}$ are $1$-dimensional the assumption of the Proposition is violated for both algebras.

On the other side for  $\alg$ the vector $x=(0,0,1)^t$ is  cyclic, meanwhile for $\tilde{\alg}$ there are no cyclic vectors.
\hfill$\square$
\end{exmp}
%%Given the block-triangular decomposition (\ref{btf}), one can construct a collection of {\em partially ordered set (poset)} with elements corresponding to blocks within a class of isomorphism, and partial relation between the block $i, j\in J_s$ present if the corresponding off-diagonal block $A_{ij}$ does not vanish. One can show that the cyclic vector fails to exist if there are more than one maximal elements in the  poset thus defined.

One notes, that so far we dealt with the diagonal part of the block-triangular form \eqref{btf}. A far reaching generalization of this result, which fully involves the whole structure,  can be (hopefully) developed using the quiver constructions of representations of associative algebras \cite{ASS}. The applications of this set of ideas will be addressed elsewhere.

\section{Implications}
\label{seccor}
We have established a sufficient criterion for the existence of cyclic vectors for (a representation of) an associative algebra generated by  finite set of generators. When dealing with control-theoretic questions, it is desirable to formulate criteria in terms of the generators. This program is far from completeness; this Section contains some partial results.

The following result regards the case of single generator $A$ and is classical.
\begin{prop}\label{ex1}
Let unital algebra $\mathfrak{A} \subset \mathcal{L}(\VecSp)$ be generated by an operator $A$. The (commutative) algebra possesses
a cyclic vector if and only if  minimal  and   characteristic polynomials of $A$ coincide up to a
constant multiplier.  \hfill$\square$
\end{prop}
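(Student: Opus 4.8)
The plan is to prove Proposition~\ref{ex1} by relating cyclicity of $A$ to the rational canonical form (equivalently, to the module structure of $\VecSp$ over the PID $\Comp[t]$).

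\textbf{Approach.} First I would invoke the structure theorem for finitely generated modules over the PID $\Comp[t]$: viewing $\VecSp$ as a $\Comp[t]$-module via $t\cdot v = Av$, one has a decomposition $\VecSp \cong \Comp[t]/(p_1)\oplus\cdots\oplus\Comp[t]/(p_r)$ with invariant factors $p_1\mid p_2\mid\cdots\mid p_r$. Under this identification $p_r$ is (up to a scalar) the minimal polynomial of $A$, and $p_1\cdots p_r$ is the characteristic polynomial of $A$. The condition that minimal and characteristic polynomials agree up to a constant is therefore equivalent to $r=1$, i.e.\ $\VecSp\cong\Comp[t]/(p)$ as a $\Comp[t]$-module (a cyclic module).

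\textbf{Key steps.} (1) For the ``if'' direction: if $\VecSp\cong\Comp[t]/(p)$ as a module, then the class of $1$ is a $\Comp[t]$-module generator; translating back, the corresponding vector $v$ satisfies $\mathrm{span}\{v, Av, A^2v, \ldots\} = \VecSp$, and since every element of the unital algebra $\alg = \Comp[A]$ is a polynomial in $A$, we get $\alg v = \VecSp$, so $v$ is cyclic. (2) For the ``only if'' direction (contrapositive): if minimal and characteristic polynomials differ, then $r\ge 2$ in the invariant factor decomposition, so $\deg p_r < n$; but for any $v\in\VecSp$, the orbit $\alg v = \mathrm{span}\{A^k v\}_{k\ge 0}$ has dimension at most $\deg p_r$ (since $p_r(A)=0$ kills everything, the vectors $v, Av, \ldots, A^{\deg p_r - 1}v$ already span the orbit), hence $\dim \alg v \le \deg p_r < n$ and no vector can be cyclic. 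One could alternatively phrase this last step directly: the minimal polynomial $m$ of $A$ satisfies $m(A)=0$, so $A^{\deg m}v$ is a linear combination of lower powers, whence $\dim \alg v \le \deg m$; cyclicity forces $\deg m \ge n = \deg(\text{char.\ poly})$, and since $m$ always divides the characteristic polynomial, they coincide up to a constant.

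\textbf{Main obstacle.} There is no serious obstacle here — this is classical linear algebra. The only point requiring a little care is the clean bookkeeping between three equivalent pictures: the operator picture (orbits $\alg v$), the polynomial picture ($\Comp[A]$ as $\Comp[t]/(m)$), and the module picture (invariant factors). I would choose the module-theoretic route as the cleanest, stating the invariant factor decomposition and reading off both implications from $r=1 \iff \deg p_r = n \iff m \doteq \chi_A$, where $\doteq$ denotes equality up to a nonzero scalar. Strictly, since the statement is asserted to be classical, a brief proof or even a pointer to a standard reference (e.g.\ the rational canonical form) suffices.
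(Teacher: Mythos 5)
Your proposal is correct and follows essentially the same route the paper indicates: the paper proves necessity by the same dimension count on the minimal polynomial (the orbit $\alg v$ has dimension at most $\deg m$), and for sufficiency it explicitly defers to ``invariant factorization or Jordan canonical form,'' which is exactly the structure-theorem argument you carry out. Your write-up simply fills in the details the paper leaves as a sketch.
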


The necessity is obvious: whenever the degree of minimal polynomial is $k < n=\dim A$, then the dimension of $\alg$ is $k<n$ and for each $v \in \mathbb{R}^n$ the vectors $v,Av, \ldots , A^kv, \ldots $ span a proper subspace of  $\VecSp$. Sufficiency is a bit more delicate; to prove it one  has to involve invariant factorization or  Jordan canonical form.

 Following result  provides a necessary condition and a sufficient condition for the presence of a cyclic vector. We formulate it in {\it Popov-Hautus form}.

\begin{prop}\label{haut}
Assume that a unital associative algebra $\mathfrak{A}$ of operators  is generated by $A_1, \ldots , A_m \in  \mathcal{L}(\VecSp)$.
\begin{enumerate}
\item
The representation $\VecSp$ is cyclic only if for any collection $\mu_1 , \ldots , \mu_m \in \mathbb{C}$ the rank
 \begin{equation}\label{rkm1}
  \mbox{\rm rank}\left[A_1 -\mu_1 I|\cdots | A_m -\mu_m I     \right] \geq n-1.
   \end{equation}
\item If the corresponding {\em Lie algebra} generated by $A_1, \ldots A_m$ is solvable, then  condition
 (\ref{rkm1}) is also sufficient.  \hfill$\square$
\end{enumerate}
 \end{prop}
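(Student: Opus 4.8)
The plan is to reduce both statements to the block-triangular normal form \eqref{btf} furnished by the Radjavi--Rosenthal theorem. For part (1), I would argue by contraposition: suppose that for some choice of scalars $\mu_1,\dots,\mu_m$ the matrix $[A_1-\mu_1 I\ |\ \cdots\ |\ A_m-\mu_m I]$ has rank at most $n-2$. This says that the common kernel $W=\bigcap_j \ker(A_j-\mu_j I)$ — equivalently, the simultaneous eigenspace of the generators with eigenvalue tuple $(\mu_j)$ — has dimension at least $2$. Since the $A_j$ generate $\alg$ as an associative algebra and each $A_j$ acts on $W$ as the scalar $\mu_j$, every element of $\alg$ acts on $W$ as a scalar (a polynomial in the $\mu_j$ with matching unit term). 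Thus for a cyclic vector $v=v'+w$ with $w$ a nonzero component in $W$ (relative to some $\alg$-invariant complement, or more simply: project the whole orbit $\alg v$ onto $W$ along a suitable invariant subspace), the image $\{Av : A\in\alg\}$ projects into the \emph{one}-dimensional span of the $W$-component of $v$, contradicting $\dim W\ge 2$. Making the projection argument clean — i.e. producing an $\alg$-module surjection $V\twoheadrightarrow W$ — is the one place that needs care: $W$ is a submodule, not a quotient, so I would instead pass to the dual representation $V^*$, where $W^\perp$ is a submodule and $V^*/W^\perp\cong W^*$ is a quotient on which $\alg^{\mathrm{op}}$ acts by scalars; cyclicity of $V$ for $\alg$ is equivalent to cyclicity of $V^*$ for $\alg^{\mathrm{op}}$, and a cyclic vector must surject onto every quotient, forcing $\dim W^*\le 1$.

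For part (2), the extra hypothesis is that the Lie algebra $\liealg$ generated by $A_1,\dots,A_m$ is solvable. By Lie's theorem, $\liealg$ — and hence the associative algebra $\alg$ it generates — can be simultaneously put in upper-triangular form. Therefore in the Radjavi--Rosenthal decomposition \eqref{btf} every diagonal block $V_i$ is one-dimensional, and by item (i) of the block-triangular Proposition each $\{A_{ii} : A\in\alg\}$ is all of $\mathcal{L}(V_i)\cong\Comp$. The diagonal action of $\alg$ is then a tuple of characters $\chi_1,\dots,\chi_k:\alg\to\Comp$, and by items (ii)--(iv) two indices $i,j$ lie in the same isomorphism class $J_s$ exactly when $\chi_i=\chi_j$. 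Condition \eqref{rkm1} failing — a common eigenspace of dimension $\ge 2$ — is precisely the statement that two of these characters agree already on the generators, hence agree, hence some class $J_s$ has $|J_s|\ge 2 > 1 = \dim V_j$; contrapositively, \eqref{rkm1} \emph{holding} forces every $|J_s|=1$, so the hypothesis \eqref{dimgemult} of Theorem \ref{k<d} is satisfied and cyclic vectors form a dense open set. I should double-check the one subtlety here: \eqref{rkm1} as stated quantifies over all scalar tuples $(\mu_j)$ and asks rank $\ge n-1$, which rules out common eigenspaces of dimension $\ge 2$ but permits (indeed, with multiple classes, requires) several \emph{distinct} one-dimensional common eigenspaces sharing, say, the same $\mu_1$ but different $\mu_2$; the argument above only needs the full character tuple, so this is fine.

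The main obstacle I anticipate is entirely in part (1): phrasing the "a cyclic vector surjects onto every quotient, and $\alg$ acts by scalars on this particular quotient" argument without hand-waving, in particular checking that the unit of $\alg$ maps to the identity on $W^*$ so that the scalar action is genuinely by a single scalar per algebra element (it is, since $\alg$ is unital and $I-\mu_j$-type combinations generate the relevant two-sided structure). Part (2) is then almost formal given Lie's theorem and Theorem \ref{k<d}. One could also remark that part (1) alone does not use solvability and is a Popov--Belevitch--Hautus-style necessary condition valid for arbitrary generated associative algebras, while part (2) is where the structure theory does real work; if space permits I would state that remark explicitly.
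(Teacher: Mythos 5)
Both halves of your argument contain a genuine gap, and the first stems from a misreading of the Hautus matrix. The block matrix $[A_1-\mu_1 I|\cdots|A_m-\mu_m I]$ is the \emph{horizontal} concatenation, so its rank being at most $n-2$ means that its \emph{left} null space --- the space $\mathcal{P}_\mu$ of covectors $p$ with $p(A_j-\mu_j I)=0$ for all $j$, i.e.\ common eigencovectors of the adjoints --- has dimension at least $2$. It does \emph{not} say that the common right eigenspace $W=\bigcap_j\ker(A_j-\mu_j I)$ is at least $2$-dimensional; the two spaces differ in general (take $A_1=E_{31}$, $A_2=E_{32}$ on $\mathbb{C}^3$: the rank at $\mu=0$ is $1=n-2$, yet $W$ is one-dimensional). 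Once you work with the correct object, necessity in part (1) is immediate and needs none of the dual-module machinery: for any $b$ the hyperplane $b^\perp$ meets the $\geq 2$-dimensional $\mathcal{P}_\mu$ in a nonzero $p$, and since $pA=\chi(A)p$ for every $A\in\alg$, this $p$ annihilates all of $\alg b$, so $b$ is not cyclic. (This is exactly the paper's argument, carried out for $r$-dimensional $\mathcal{B}$ in the proof of Proposition \ref{ex2}.) Note also that your proposed repair via duality relies on the claim that $V$ is cyclic for $\alg$ iff $V^*$ is cyclic for $\alg^{\mathrm{op}}$; this is false --- the paper's algebra $\tilde{\alg}$ has no cyclic vector, while its transpose acting on $V^*$ is cyclic with cyclic vector $e_1^*$.

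Part (2) has a second, independent error: condition \eqref{rkm1} does not imply the multiplicity bound \eqref{dimgemult} of Theorem \ref{k<d}, so the proposed reduction cannot work. You identify ``two diagonal characters agree'' with ``a common eigenspace of dimension $\geq 2$,'' but these differ exactly as algebraic and geometric multiplicity differ: in the paper's $3\times 3$ Toeplitz algebra $\alg$ (commutative, hence solvable) the single character has multiplicity $|J_1|=3>1=\dim V_1$, yet $\mathrm{rank}(N-\mu I)\geq n-1$ for every $\mu$, and that algebra \emph{is} cyclic. So \eqref{rkm1} can hold while the hypothesis of Theorem \ref{k<d} fails badly. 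The paper's sufficiency proof runs in the opposite direction: if $b$ is not cyclic, then $(\alg b)^\perp$ is a nonzero invariant subspace for the adjoints, which generate a solvable Lie algebra, so Lie's theorem produces a common eigencovector $p\in(\alg b)^\perp$ with some eigenvalue tuple $\tilde{\mu}$; condition \eqref{rkm1} bounds $\dim\mathcal{P}_{\tilde{\mu}}\leq 1$, the set of exceptional tuples is finite, and a generic $b$ avoids the finitely many hyperplanes $\mathcal{P}_\mu^\perp$, hence is cyclic. Your closing remark that part (1) requires no solvability is correct and agrees with the paper.
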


It does not take much longer to prove the following generalization to the multi-input case.

\begin{prop}\label{ex2} Assume that a unital associative algebra $\mathfrak{A}$ of operators  is generated by $A_1, \ldots , A_m \in  \mathcal{L}(\VecSp)$.

\begin{enumerate}
\item
The representation $\VecSp$ possesses a cyclic $r$-dimensional subspace $\mathcal{B}$,  only if for any collection $\mu_1 , \ldots , \mu_m \in \mathbb{C}$ the rank
\begin{equation}\label{rkr}
   \mbox{\rm rank}\left[A_1 -\mu_1 I|\cdots | A_m -\mu_m I     \right] \geq n-r.
 \end{equation}

\item If the corresponding Lie algebra generated by $A_1, \ldots A_m$ is solvable, then the condition
 (\ref{rkr}) is also sufficient. \hfill$\square$
\end{enumerate}
 \end{prop}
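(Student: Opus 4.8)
The plan is to prove necessity and sufficiency separately, carrying out everything in the dual space $(\VecSp)^{*}$, where the rank appearing in \eqref{rkr} acquires a transparent meaning; solvability will be invoked only for sufficiency. For $\mu=(\mu_{1},\dots,\mu_{m})\in\Comp^{m}$ put
\[
W_{\mu}=\{\,w\in(\VecSp)^{*}\ :\ A_{j}^{\top}w=\mu_{j}w,\ j=1,\dots,m\,\},
\]
the space of common left eigenvectors of $A_{1},\dots,A_{m}$ with eigenvalue tuple $\mu$. Since $W_{\mu}$ is exactly the left kernel of the $n\times mn$ block matrix $[\,A_{1}-\mu_{1}I\mid\cdots\mid A_{m}-\mu_{m}I\,]$, condition \eqref{rkr} is equivalent to $\dim W_{\mu}\le r$ for every $\mu\in\Comp^{m}$. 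For a subspace $\mathcal{B}\subset\VecSp$ let $\mathcal{B}^{\perp}\subset(\VecSp)^{*}$ be its annihilator, $\alg^{\top}=\{A^{\top}:A\in\alg\}$, and $\mathcal{R}(\mathcal{B})=\mbox{span}\{\,Ab:\ A\in\alg,\ b\in\mathcal{B}\,\}$ the reachable subspace, which is $\alg$-invariant and contains $\mathcal{B}$.

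For necessity I would argue by contraposition, and no solvability is needed. Suppose \eqref{rkr} fails for some $\mu$, so $\dim W_{\mu}\ge r+1$, and let $\mathcal{B}$ be \emph{any} $r$-dimensional subspace of $\VecSp$. Since $\dim\mathcal{B}^{\perp}=n-r$, the intersection $W_{\mu}\cap\mathcal{B}^{\perp}$ contains a nonzero vector $w_{0}$. The line $\Comp w_{0}$ is invariant under each $A_{j}^{\top}$, hence under every operator in $\alg^{\top}$, so $w_{0}^{\top}A\in\Comp\,w_{0}^{\top}$ for all $A\in\alg$; combined with $w_{0}^{\top}b=0$ for $b\in\mathcal{B}$ this gives $w_{0}^{\top}Ab=0$, i.e. $w_{0}\perp\mathcal{R}(\mathcal{B})$. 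Thus $\mathcal{R}(\mathcal{B})\ne\VecSp$, so no $r$-dimensional subspace is cyclic; equivalently, existence of a cyclic $r$-dimensional subspace forces \eqref{rkr}. For $r=1$ this is the necessity half of Proposition~\ref{haut}.

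For sufficiency assume in addition that the Lie algebra generated by $A_{1},\dots,A_{m}$ is solvable; then so is the one generated by $A_{1}^{\top},\dots,A_{m}^{\top}$. The core is the dictionary: \emph{an $r$-dimensional $\mathcal{B}$ is cyclic if and only if $\mathcal{B}^{\perp}$ contains no nonzero common left eigenvector of $A_{1},\dots,A_{m}$.} The forward implication is the computation above. Conversely, if $\mathcal{B}$ is not cyclic then $\mathcal{R}(\mathcal{B})$ is a proper $\alg$-invariant subspace, so $\mathcal{R}(\mathcal{B})^{\perp}$ is a nonzero $\alg^{\top}$-invariant subspace of $\mathcal{B}^{\perp}$; by Lie's theorem it contains a common eigenvector of all $A_{j}^{\top}$, which lies in $\mathcal{B}^{\perp}$ — contradiction. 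Also, by Lie's theorem $(\VecSp)^{*}$ has a composition series with $1$-dimensional factors, so only finitely many $\mu$, say $\mu^{(1)},\dots,\mu^{(N)}$, have $W_{\mu}\ne 0$, and $\dim W_{\mu^{(i)}}\le r$ by \eqref{rkr}. It therefore suffices to exhibit an $(n-r)$-dimensional subspace $U$ with $U\cap W_{\mu^{(i)}}=0$ for all $i$; since $(n-r)+\dim W_{\mu^{(i)}}\le n$, such $U$ form a nonempty (in fact open dense) subset of $\mathrm{Gr}(n-r,(\VecSp)^{*})$. Taking $\mathcal{B}=U^{\perp}$ and transporting density back along $\mathcal{B}\mapsto\mathcal{B}^{\perp}$ shows that the cyclic $r$-dimensional subspaces form an open dense subset of $\mathrm{Gr}(r,\VecSp)$, hence exist; for $r=1$ this is Proposition~\ref{haut}(2).

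The dual-space bookkeeping and the Schubert-type dimension count are routine. The hard part — the one load-bearing step — is the backward direction of the dictionary, where Lie's theorem upgrades a nonzero $\alg^{\top}$-invariant subspace of $\mathcal{B}^{\perp}$ to an honest common left eigenvector inside $\mathcal{B}^{\perp}$; this is exactly the implication that collapses when $\alg$ is not solvable, since an invariant subspace of $(\VecSp)^{*}$ carrying an irreducible representation of dimension $>1$ has no common left eigenvector at all, and then \eqref{rkr} can hold while no cyclic $r$-dimensional subspace exists. I therefore expect the main care in writing this up to go into the precise statement of the dictionary and the verification that only finitely many $W_{\mu}$ matter.
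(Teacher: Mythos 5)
Your proposal is correct and follows essentially the same route as the paper: necessity by intersecting the $(r+1)$-dimensional space of common left eigenvectors with the $(n-r)$-dimensional annihilator $\mathcal{B}^{\perp}$, and sufficiency by applying Lie's theorem to the adjoint operators restricted to $\mathcal{R}^{\perp}$ and then a transversality/genericity count over the finitely many degenerate tuples $\mu$. Your dual-space phrasing of the final transversality step (choosing $U=\mathcal{B}^{\perp}$ avoiding each $W_{\mu}$) is equivalent to the paper's primal-space statement that a generic $\mathcal{B}$ is transversal to all $\mathcal{P}^{\perp}_{\mu}$, and your justification of the finiteness of the degenerate set via the composition series is a slightly more explicit version of what the paper asserts.
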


\begin{proof}
%%\begin{enumerate}
%%\item

1) If \eqref{rkr} fails, i.e. for some $\tilde \mu =(\tilde \mu_1 , \ldots , \tilde \mu_m) \in \mathbb{C}^m$:  $$\mbox{\rm rank}\left[A_1 -\tilde \mu_1 I|\cdots | A_m -\tilde \mu_m I     \right] \leq n-r-1 ,$$  then there exist an $(r+1)$-dimensional subspace  $\mathcal{P} \subset V^{n*},$ such that
$$\forall p \in \mathcal{P}: \ p(A_j -\tilde \mu_j I)=0,  j=1, \ldots ,m. $$

For each $r$-dimensional subspace $\mathcal{B}$ its annihilator $\mathcal{B}^\perp \subset V^{n*} $ is $(n-r)$-dimensional and therefore has a nontrivial intersection with $\mathcal{P}$.

If $\tilde p \neq 0$ belongs to $\mathcal{P} \bigcap \mathcal{B}^\perp$, then
   $$\tilde p \cdot (A_{m_n}^{\ell_n}\cdots A_{m_1}^{\ell_1}\mathcal{B})=\tilde \mu_{m_n}^{\ell_n} \cdots \tilde \mu_{m_1}^{\ell_1} \tilde p \cdot \mathcal{B}=0 , $$
and hence $\tilde p$ annihilates $\mathcal{R}$ at the left-hand side of \eqref{rcon}, meaning that $\mathcal{B}$ fails to be cyclic.

%%\item
2) Let both solvability assumption and \eqref{rkr} hold, while an $r$-dimensional subspace $\mathcal{B}$  fail to be cyclic, i.e. $\mathcal{R}$  in \eqref{rcon} is a proper subspace of $V^n$.

As far as $\mathcal{R} \subsetneq V^n$ is  invariant with respect to $A_1, \ldots , A_m$, then $\mathcal{R}^\perp \subset V^{n*}$ is  nontrivial invariant subspace for the adjoint
operators $A_1^{*}, \ldots,  A_m^{*}$.

Obviously $A_1^{*}, \ldots,  A_m^{*}$ generate solvable Lie algebra  and so do their restrictions  \linebreak $A_1^{*}|_{\mathcal{R}^\perp}, \ldots,  A_m^{*}|_{\mathcal{R}^\perp}$.

By Lie theorem \cite{GG} there is a common eigen(co)vector $p \in \mathcal{R}^\perp$ for the operators   %%$A_j^{*}|_{\mathcal{R}^\perp}, \ j=1, \ldots ,m,$,
$A_1^{*}|_{\mathcal{R}^\perp}, \ldots,  A_m^{*}|_{\mathcal{R}^\perp}$,
i.e. for some $\tilde \mu=(\tilde \mu_1, \ldots , \tilde \mu _m) \in \mathbb{C}^m$:
\begin{equation}\label{joeg}
  p(A_j -\tilde \mu_j I)=0, \  j=1, \ldots ,m; \ p \mathcal{B}=0.
\end{equation}

The eigen(co)vectors,  satisfying (\ref{joeg}), form a subspace $\mathcal{P}_{\tilde \mu}$, which by assumption has  dimension  $\dim \mathcal{P}_{\tilde \mu} \leq r$.

The  set $M$ of the $m$-tuples   $\mu=(\mu_1 , \ldots ,  \mu_m) \in \mathbb{C}^m$, for which the rank in \eqref{rkr} is $<n$, is finite. Obviously $\tilde \mu \in M$.

By virtue of \eqref{joeg} $\mathcal{B}^\perp \bigcap \mathcal{P}_{\tilde \mu} \neq \{0\}$.
Denote by $\mathcal{P}^\perp_{\tilde \mu}\subset \VecSp$ the set of vectors annihilated by all $p \in \mathcal{P}_{\tilde \mu}$.
Then
$$\mathcal{P}^\perp_{\tilde \mu} + \mathcal{B} = (\mathcal{B}^\perp \bigcap \mathcal{P}_{\tilde \mu})^\perp \neq \VecSp,$$ i.e.  $\mathcal{B}$ is {\it not} transversal to $\mathcal{P}^\perp_{\tilde \mu}$.

Since $\dim \mathcal{P}^\perp_\mu \geq (n-r),\ \forall \mu \in M$, then by elementary transversality argument the set of $r$-dimensional subspaces $\mathcal{B}$, transversal to {\it all} $\mathcal{P}^\perp_\mu , \mu \in M$, is open dense in the respective Grassman manifold, hence generic $r$-dimensional subspace is cyclic.
%%%\end{enumerate}
\end{proof}

\begin{cor}
 Condition (\ref{rkr}) is necessary and sufficient for a presence of a cyclic subspace $\mathcal{B}$, whenever associative algebra $\mathfrak{A}$,   generated by $A_1, \ldots  , A_m$ is commutative.\ending
\end{cor}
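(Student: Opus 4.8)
The plan is to deduce this as an immediate corollary of Proposition \ref{ex2}, so the work is essentially bookkeeping. First I would dispose of necessity: the implication ``cyclic $r$-dimensional subspace exists $\Rightarrow$ \eqref{rkr}'' is precisely part (1) of Proposition \ref{ex2}, and its proof (the duality/annihilator computation with a common left eigen(co)vector) uses no commutativity whatsoever. So necessity holds verbatim.

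For sufficiency I would translate the hypothesis ``$\mathfrak{A}$ is commutative'' into the solvability hypothesis of Proposition \ref{ex2}(2). Since $\mathfrak{A}$ is commutative we have in particular $A_iA_j=A_jA_i$, i.e. $[A_i,A_j]=0$ for all $i,j$. Hence every iterated Lie bracket of the generators vanishes, so the Lie algebra generated by $A_1,\ldots,A_m$ is just their linear span; in particular it is abelian, and therefore solvable. Now part (2) of Proposition \ref{ex2} applies directly and gives that \eqref{rkr} is sufficient for the existence of an $r$-dimensional cyclic subspace (in fact a generic $r$-dimensional subspace is cyclic).

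There is no real obstacle here; the only step that warrants a sentence is the identification of the Lie algebra generated by the $A_i$ with their span, which is immediate once one observes that all brackets are zero. If one preferred a self-contained argument one could instead rerun the proof of Proposition \ref{ex2}(2), replacing the appeal to Lie's theorem by the simultaneous triangularizability of a commuting family of operators to produce the common eigen(co)vector $p$ with $p(A_j-\tilde\mu_jI)=0$; but citing the already-proved proposition is both shorter and cleaner, so that is the route I would take.
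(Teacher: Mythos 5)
Your proposal is correct and matches the paper's intent exactly: the corollary is stated with no separate proof precisely because it is the immediate specialization of Proposition \ref{ex2}, with necessity being part (1) verbatim and sufficiency following from part (2) once one notes that a commutative associative algebra forces all brackets $[A_i,A_j]$ to vanish, so the generated Lie algebra is abelian and hence solvable. Nothing further is needed.
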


\section{Examples}
\label{sec_ex}

We will apply the results of the previous section to the study of controllability of {\it multidimensional rigid body (MRB)},
which is a particular case of the model \eqref{dycon} described in  Subsection~\ref{mot2}.

The state space of the system is the Lie algebra $so(n)$ of skew-symmetric matrices.  The  dynamics of MRB is described by
{\it Euler-Frahm equation} (\cite{FK,Sa09})
$ \dot{M}=[C,\Omega^2]$. Here $\Omega$ is the angular velocity, $M$ is the momentum of MRB, defined by the relation $M=C\Omega+\Omega C=\mathcal{I}_C\Omega$, $C$ is a symmetric positive semidefinite matrix. The operator $\mathcal{I}_C$ is called  {\it inertia operator} of the MRB.

If one assumes $C$ positive definite, then  $\mathcal{I}_C$ is invertible, and we can write Euler-Frahm equation as
$\dot{\Omega}=\mathcal{I}^{-1}_C[C,\Omega^2]$.

One can choose a basis in such a way that $C$ becomes diagonal:\linebreak $C=\mbox{\rm diag}(C_1, \ldots, C_n)$.
Besides we will assume that MRB is dynamically asymmetric:  $0< C_1  < \cdots < C_n.$

We make use of  the equilibrium points of Euler-Frahm  equation. There are plenty of
 them; we pick "principal axes" - the  skew symmetric matrices $S^{ij}=\mathbf{1}_{ij}-\mathbf{1}_{ji}$
 with all but two elements $(ij)$ and $(ji)$ vanishing. Obviously matrices $(S^{ij})^2=-\mathbf{1}_{ii}-\mathbf{1}_{jj}$ are diagonal and commute  with the diagonal matrix $C$.

 Controlled Euler-Frahm equation with damping has form
\begin{equation}\label{cmrb}
 \dot{\Omega}=\mathcal{I}^{-1}_C[C,\Omega^2]+D\Omega +\sum_{k=1}^rb_j u_j(t), \
 b_j \in so(n), \ j=1, \ldots ,r.
\end{equation}

%%%%%%%%%%%%%%%%%%%%%%%%%%%%%%%
%\textcolor{red}{Edited 22-03 $\downarrow$}
%%%%%%%%%%%%%%%%%%%%%%%%%%%%%%%%%%%%%%%%%

The symmetric bilinear form, corresponding to the quadratic term at the right-hand side of \eqref{cmrb} is
$$\euler(\Omega^1,\Omega^2)=
\frac{1}{2}\mathcal{I}^{-1}_C[C,\Omega_1\Omega_2+\Omega_2\Omega_1]. $$

  For our example we consider the controlled Euler-Frahm equation on  $6$-dimensional  algebra $so(4)$ of $4$-dimensional skew-symmetric matrices.
The number of controls $r=3$.

Let us choose the basis in $so(4)$ consisting of the $2$-element matrices - principal axes  $S^{12}, S^{13}, S^{14}$, $S^{23}, S^{24},S^{34}$.
The multiplication table for the bilinear form $\euler$ in this basis is defined by the relations
 \begin{eqnarray*}
 % \nonumber to remove numbering (before each equation)
  \euler(S^{ij}, S^{hk})=0,  \hbox{if} \ \{i,j\} \cap \{h,k\}= \emptyset;  \\
    \euler(S^{ij}, S^{ik})=c_{jk}S^{jk}, \  \hbox{where} \ c_{jk}=\frac{C_k-C_j}{C_j+C_k}.
 \end{eqnarray*}
and the convention $S^{ij}=-S^{ji}$ for all $i,j$.

According to the method described in Subsection \ref{mot2} (see also \cite{Sa09} for details) we choose  some of the controlled directions $b_j$ in \eqref{cmrb}  among $S^{ij}$.

\underline{Case 1.} Let  two controlled directions in  \eqref{cmrb} be $b_1=S^{12}, b_2=S^{23}$.
Taking the linearizations $\Lambda^{12},\Lambda^{23}$ of $\euler$ at  $S^{12},S^{23}$,
we wish to know whether there exist  $b \in so(4)$ - a cyclic vector for the algebra generated by $\Lambda^{12},\Lambda^{23}$.
According to \cite{Sa09} the MRB would be controllable in this case by the control "torque" $S^{12}u_1(t)+S^{23}u_2(t)+Lu(t)$.

Computing $\Lambda^{12},\Lambda^{23}$ in the chosen basis (using the multiplication table) we get:
$$\Lambda^{12}=\left(
                 \begin{array}{cccccc}
                   0& 0      & 0      & 0       & 0       & 0 \\
                   0 & 0     & 0      & -c_{13} & 0       & 0 \\
                   0 & 0     & 0      & 0       & -c_{14} & 0 \\
                   0 & c_{23}& 0      & 0       & 0       & 0 \\
                   0 & 0     & c_{24} & 0       & 0       & 0 \\
                   0 & 0     & 0      & 0       & 0       & 0 \\
                 \end{array}
              \right), $$

$$\Lambda^{23}=\left(
                 \begin{array}{cccccc}
                   0      & -c_{12}& 0   & 0       & 0       & 0 \\
                   c_{13} & 0      & 0   & 0       & 0       & 0 \\
                   0      & 0      & 0   & 0       & 0       & 0 \\
                   0      & 0      & 0   & 0       & 0       & 0 \\
                   0      & 0      & 0   & 0       & 0       & -c_{24} \\
                   0      & 0      & 0   & 0       & c_{34}  & 0 \\
                 \end{array}
               \right). $$

Evidently each of $\Lambda^{12}, \Lambda^{23}$   has double eigenvalue $0$,
and minimal polynomial of $5$th degree, what means that there are no cyclic vectors for the  associative algebras generated by $\Lambda^{12}$ {\it or}  by
$\Lambda^{23}$.

The operator $\Lambda^{12}+ \Lambda^{23}$ has double eigenvalue $0$.
We prove that its perturbation  $\Lambda^\varepsilon=\Lambda^{12}+ \Lambda^{23} +\varepsilon \Lambda^{12}\Lambda^{23}$ has simple eigenvalues for small $\varepsilon$
and for generic values of the principal moments of inertia $C_1,C_2,C_3,C_4$ of our MRB.

The characteristic polynomial of $\Lambda^\varepsilon$ has the form
$$P_\varepsilon(\zeta)=\zeta^6+p_4\zeta^4-\varepsilon p_3 \zeta^3+p_2\zeta^2-\varepsilon p_1 \zeta +\varepsilon^2 p_0.$$
Evidently it has double zero root for $\varepsilon=0$.

By elementary bifurcation theory  the first-order (in $\varepsilon$) perturbations $\zeta^\varepsilon_1,\zeta^\varepsilon_2$  of the  double zero root are to be  found from the bifurcation equation  (\cite{VT})
%%\begin{equation}\label{bif}
 $$ p_2\zeta^2-\varepsilon p_1 \zeta +\varepsilon^2 p_0=0.$$
%%\end{equation}

The roots $\zeta^\varepsilon_1,\zeta^\varepsilon_2$ of this equation  are distinct  for small $\varepsilon$, if
the discriminant $\Delta=p_1^2-4p_2p_0 \neq 0$.

Direct computation shows that  $\Delta$ is a rational function of $C_1,C_,C_3,C_4$, representable as a
ratio $\Delta=\frac{\Delta_n(C_1,C_2,C_3,C_4)}{\Delta_d(C_1,C_2,C_3,C_4)}$ of two polynomials of 16th degree (with $\Delta_d>0$ for real $C$'s).
Therefore $\Delta \neq 0$ on the complement of an algebraic hypersurface
$\{\Delta_n(C_1,C_2,C_3,C_4)=0\}$, an open dense subset of the set of parameters $C_j$.

Thus for a generic choice of $C_j$ and for small $\varepsilon >0$ the matrix $\Lambda^\varepsilon$ has simple eigenvalues. It follows that for the associative algebra generated by $\Lambda^{12},\Lambda^{23}$ (or for its subalgebra generated by $\Lambda^\varepsilon$) there is an open dense set of  cyclic vectors $b \in so(4)$.

\underline{Case 2.} If we take $b_1=S^{12}, b_2=S^{34}$, then
the linearization $\Lambda^{34}$ of $\euler$ at  $S^{34}$
equals
$$\Lambda^{34}=\left(
                 \begin{array}{cccccc}
                   0& 0      & 0      & 0       & 0       & 0 \\
                   0 & 0     & -c_{13} &0       & 0       & 0 \\
                   0 & c_{14} & 0      & 0       & 0 & 0      \\
                   0 & 0     & 0      & 0       & -c_{23} & 0 \\
                   0 & 0     & 0      & c_{24}   & 0     & 0 \\
                   0 & 0     & 0      & 0        & 0       & 0 \\
                 \end{array}
               \right). $$
 Evidently $ p \Lambda^{12}= p \Lambda^{34}=0 $ for  all $p \in so^*(4)$, which in chosen basis satisfy the relations $\{p_i=0, \ 2 \leq i \leq 5\}$.

  Such $p$ form a $2$-dimensional subspace and hence  according to Proposition \ref{haut} there are no cyclic vectors for
 the associative algebra generated by $\Lambda^{12},\Lambda^{34}$.

\section{APPENDIX}

{\it Proof of  theorem~\ref{k<d}.} We proceed over the field $\mathbb{C}$ and use $V$ in place of $\VecSp$.

The formulation of  the theorem refers to the block-triangular form  \eqref{btf},  the proof refers more directly to
the structure of (the representation of) the associative algebra, which is manifested by \eqref{btf}.

{\bf 1.} First note  that one may consider semisimple  algebras. Indeed lacking semisimplicity for
$\mathfrak{A}$ means possessing a nontrivial radical $\mathfrak{N}=Rad(A)$, which in the case of finite-dimensional unital associative algebra is the largest nilpotent two-sided  ideal.

By Wedderburn theorem for associative algebras there exists a semisimple subalgebra $\mathfrak{S}$ of $\mathfrak{A}$, such that $\mathfrak{A}=\mathfrak{S}+\mathfrak{N}$.
The semisimple summand $\mathfrak{S}$ is not defined in unique way; different possible summands are related by similarities according to Malcev theorem (\cite{CR}).

%%For associative matrix algebras represented in block-%%triangular form \eqref{btf} the following result holds.

For an algebra consisting  of matrices, having  block-triangular  form \eqref{btf} the following holds:
\begin{enumerate}
\item[i)] the radical $\mathfrak{N}$ consists of strictly block-triangular matrices, so that all blocks $N_{ii}$ vanish;
\item[ii)] the similarities preserve the block-diagonal parts of matrices.
\end{enumerate}

Obviously a cyclic vector for a semisimple summand $\mathfrak{S}$ is also a cyclic vector for the algebra $\mathfrak{A}$.

{\bf 2.} Any finite-dimensional semisimple algebra has finitely many (up to an isomorphism) irreducible representations $W_j$ and
\begin{equation}\label{ssd}
   \mathfrak{S}  \simeq \oplus^\ell_{j=1} \mathcal{L}(W_j).
\end{equation}
Any finite-dimensional representation of $\mathfrak{S} $ consists of a  finite number of copies of $W_j$.

 {\bf 3.} The units $\mathbf{e_j}, j=1, \ldots , \ell$  of the summands $\mathcal{L}(W_j)$ are idempotents of $\mathfrak{S}$,   which satisfy the  properties:

 i) $\mathbf{e_j}\mathbf{e_k}=\delta_{jk}\mathbf{e_k}$; \
 ii) $\mathbf{e_1}+ \cdots + \mathbf{e_\ell}=\mathbf{1_\mathfrak{S}}$ -  the unit of $\alg$.

 The vector spaces $e_jV$ are representations of the  summands $\mathcal{L}(W_j)$; conversely, to each $\ell$-ple of representations of $\mathcal{L}(W_j), \ j=1, \ldots , \ell,$ there corresponds a representation $V=e_1V \oplus \cdots \oplus e_\ell V$ of $\mathfrak{S}$.

 {\bf 4.} From now on we concentrate on a representation $e_jV$  of a single summand    $\mathcal{L}(W_j); \ \dim W_j=d_j$.
 The whole construction is just "glued together" from the constructions, accomplished for each summand.
  For the sake of brevity we put $V, W, d$ in place of $e_jV, W_j, d_j$.

 Irreducible representations of $\mathcal{L}(W)$ are isomorphic to\footnote{the left-side ideals of $\mathcal{L}(W)$, which are similar to the ideals of matrices with a single nonzero column, and hence to}  $\mathbb{C}^d$ . Any  representation  of $\mathcal{L}(W)$ is a finite direct sum of  $k$  copies of $\mathbb{C}^d$.
 The number $k$  is   the multiplicity of the respective diagonal block in \eqref{btf}: $k =|J_j|$.
Therefore $\dim V= kd$.

Recall that by assumption of the Theorem $k \leq d$.

 {\bf 5.} Let $E_{ij}, i,j=1, \ldots , d$ form a  basis in $\mathcal{L}(W)$, such that for some choice of a basis of $W$, $E_{ij}$ is $d \times d$-matrix  with unique nonzero (unit)  element at the intersection of $i$-th row and $j$-column.

 Evidently $E_{11}+ \cdots + E_{dd}$ is the identity $Id$, and
 $$\forall v \in V: \ v=\rho(E_{11})v+ \cdots +  \rho(E_{dd})v .$$
 Once again $E_{ii}$ are idempotents and $E_{ii}E_{jj}=\delta_{ij}Id$.
 Hence $\rho(E_{ii}):V \to V$ are projections and
 $$V=\rho(E_{11})V \oplus \cdots \oplus \rho(E_{dd})V. $$
 Besides $\rho(E_{ji}), \rho(E_{ij})$ are mutually inverse isomorphisms between the spaces \linebreak
 $ \rho(E_{ii})V$ and $ \rho(E_{jj})V$.
 Therefore $\dim \rho(E_{jj})V=k$ for each $j=1, \ldots , d$.

{\bf 6.}  Now we construct a  cyclic vector $x$   for the representation of $\mathcal{L}(W)$ on $V$.
We seek $x$ in the form $x=x_1 + \cdots + x_k$, where $x_j \in  \rho(E_{jj})V,\ j=1, \ldots k$.
   For the moment we {\it assume} that $x_1, \ldots , x_k$ are chosen in such a way, that  the vectors
  \begin{equation} \label{lix}
   \rho(E_{ij})x_j, \ i=1, \ldots , d;\  j=1, \ldots ,k \ \mbox{\rm are linearly independent,}
  \end{equation}
   and therefore span the $kd$-dimensional space $V$.

   To prove  that under assumption \eqref{lix} the vector $x$ is cyclic,  we act on $x$  by   matrix
 $\rho(B)=\sum_{i=1}^d \sum_{j=1}^d b_{ij}\rho(E_{ij}) $.
 Then
 $$\rho(B)x=\sum_{i=1}^d \sum_{j=1}^d \sum_{s=1}^k b_{ij}\rho(E_{ij})x_s,  $$
and as far as  $\rho(E_{ij})x_s=\delta_{js} \rho(E_{is})x_s$ we get
 $$\rho(B)x=\sum_{i=1}^d \sum_{s=1}^k b_{is}\rho(E_{is})x_s.  $$
 By virtue of assumption \eqref{lix} the  linear combinations at the right hand side span the whole $V$.

{\bf 7.} To find $x_1, \ldots x_k$, which satisfy \eqref{lix}, we proceed  by induction on $k$.

      For $k=1$ we pick any nonzero $x_1 \in \rho(E_{11})V $ and compute the vectors $\rho(E_{i1})x_1, \ i=1, \ldots , d,$ which are
   linearly independent as far $\rho(E_{i1})$ map isomorphically  $\rho(E_{11})V$  onto the direct summands
   $\rho(E_{ii})V$.

   Assume that one has   constructed  vectors $x_1,\ldots , x_{h-1} \ (h \leq k)$,  which satisfy \eqref{lix}.
   One can find  $x_{h} \in  \rho(E_{hh})V$, which is linearly independent with $h-1$ (linearly independent) vectors
   $\rho(E_{hj})x_j, \ j=1, \ldots , h-1$.
   Then   $\rho(E_{ih}), \ i=1, \ldots ,d$ map
   isomorphically $h$-dimen\-sional subspace $\mbox{Span}\{x_1, \ldots , x_h\}$, onto
   $h$-dimensional subspaces of the direct summands  $\rho(E_{ii})V$ and the step of induction is completed. \hfill$\square$

\nopagebreak

\section{ACKNOWLEDGMENT}

A.S. thanks the Coordinated Science Laboratory, University of Illinois at Urbana-Champaign for the hospitality during his visit in 2013.

%%%%%%%%%%%%%%%%%%%%%%%%%%%%%%%%%%%%%%%%%%%%%%%%%%%%%%%%%%%%%%%%%%%%%%%%%%%%%%%%

%%%%%%%%%%%%%%%%%%%%%%%%%%%%%%%%%%%%%%%%%%%%
\end{document}